\newtheorem{theorem}{Theorem}[section]
\newtheorem{proposition}[theorem]{Proposition}
\newtheorem{lemma}[theorem]{Lemma}
\newtheorem{cor}[theorem]{Corollary}
\newcommand{\conj}{\overline}
\theoremstyle{definition}
\newtheorem{definition}[theorem]{Definition}
\newtheorem{example}[theorem]{Example}
\newtheorem{problem}[theorem]{Problem}
\theoremstyle{remark}
\newtheorem{remark}[theorem]{Remark}
\numberwithin{equation}{section}
\def\DJ{{\hbox{D\kern-.8em\raise.15ex\hbox{--}\kern.35em}}}
\def\DJo{$\;$\kern-.4em
    \hbox{D\kern-.8em\raise.15ex\hbox{--}\kern.35em koori\'c}}
\def\al{{\alpha}}
\def\be{{\beta}}
\def\la{{\lambda}}
\def\bR{{\mathbb {R}}}
\def\bC{{\mathbb {C}}}
\def\bH{{\mathbb {H}}}
\def\pA{{\mathcal A}}
\def\pF{{\mathcal F}}
\def\pI{{\mathcal I}}
\def\pJ{{\mathcal J}}
\def\pL{{\mathcal L}}
\def\pP{{\mathcal P}}
\def\pR{{\mathcal R}}
\def\pS{{\mathcal S}}
\def\pU{{\mathcal U}}
\def\pV{{\mathcal V}}
\def\pK{{\mathcal K}}
\def\pW{{\mathcal W}}
\def\pO{{\mathcal O}}
\def\tr{{\rm tr\;}}
\def\Tr{{\rm Tr\;}}
\def\End{{\mbox{\rm End}}}
\def\diag{{\mbox{\rm diag}}}
\def\GL{{\mbox{\rm GL}}}
\def\Ort{{\mbox{\rm O}}}
\def\Sp{{\mbox{\rm Sp}}}
\def\U{{\mbox{\rm U}}}
\def\Dx{\frac{\partial}{\partial x}}
\def\Dy{\frac{\partial}{\partial y}}
\renewcommand{\subjclassname}{\textup{2000} Mathematics Subject
Classification}
\begin{document}

\title[Quaternionic matrices]
{Quaternionic matrices: Unitary similarity, simultaneous
triangularization and some trace identities}

\author[D.\v{Z}. \DJ okovi\'{c} and B. Smith]
{Dragomir \v{Z}. \DJ okovi\'{c} and Benjamin H. Smith}

\address{Department of Pure Mathematics, University of Waterloo,
Waterloo, Ontario, N2L 3G1, Canada}

\email{djokovic@uwaterloo.ca} \email{bh2smith@uwaterloo.ca}

\thanks{
The first author was supported by the NSERC Discovery Grant A-5285,
and the second by an NSERC Undergraduate Student Research Award}

\keywords{Quaternionic matrices, unitary similarity, simultaneous
triangularization, trace identities, semi-algebraic sets}

\date{}

\begin{abstract}
    We construct six unitary trace invariants for $2\times 2$ quaternionic
    matrices which separate the unitary similarity classes of such
    matrices, and show that this set is minimal. We have discovered a
    curious trace identity for two unit-speed one-parameter subgroups of
    $\Sp(1).$ A modification gives an infinite family of trace identities
    for quaternions as well as for $2\times 2$ complex matrices.
    We were not able to locate these identities in the literature.

    We prove two quaternionic versions
    of a well known characterization of triangularizable subalgebras
    of matrix algebras over an algebraically closed field. Finally
    we consider the problem of describing the semi-algebraic set of
    pairs $(X,Y)$ of quaternionic $n\times n$ matrices which are
    simultaneously triangularizable. Even the case $n=2,$ which we
    analyze in more detail, remains unsolved.
\end{abstract}
\maketitle \subjclassname { 15A33,15A18,16R30}

\section{Introduction}
We denote by $\bR,\bC,\bH$ the field of real numbers, complex
numbers and the division ring of real quaternions, respectively.
Throughout we use $D$ to represent an element from the set
$\{\bR,\bC,\bH\}$ and $M_n(D)$ denotes the algebra of $n\times n$
matrices over $D$. Also, we let $\pU_n(D)$ resp. $\pL_n(D)$ denote
the space of upper resp. lower triangular matrices in $M_n(D)$. In
the case where $D=\bH$ we will omit parentheses and write
$M_n,\pU_n,\pL_n$.

The maximal compact subgroup of the general linear group $\GL_n(D)$,
$\Ort(n)$ in the real case, $\U(n)$ in the complex case and $\Sp(n)$
in the quaternionic case, acts on $M_n(D)$ by conjugation, i.e.,
$(X,A)\mapsto XAX^{-1},\,A\in M_n(D).$ It is an old problem to
determine an explicit minimal set of generators for the algebra of
polynomial invariants for this action. Explicit results are known
(in all three cases) for small values of $n.$ Let us mention that
the algebra of real polynomial $\Sp(n)$-invariants is generated by
the trace functions $\Tr \left(w(X,X^*)\right)$ where $\Tr:M_n\rightarrow\bR$ is
the quaternionic trace (see next section for definition) and $w$ is
any word in two letters.

The first question that we consider is that of separating the orbits
of the above action by means of a minimal set of polynomial
invariants. The real and complex cases have been studied
extensively. For instance, Pearcy shows in \cite{Pe} that $A,B\in
M_2(\bC)$ are unitarily similar if and only if $\tr \left(X\right)$, $\tr\left( X^2\right) $
and $\tr \left(XX^*\right)$ take the same values on $A$ and $B$. He also gives a
list of nine words in $X$ and $X^*$ whose traces distinguish the
unitary orbits in $M_3(\bC)$. This is reduced to a minimal set of
seven words by Sibirski\u\i\, \cite{KS}. As far as we know, there
are no such results in the quaternionic case except for the case
$M_1$, which is trivial.

In section 3 we extend the first of Pearcy's results to $M_2$, i.e.,
the $2\times 2$ quaternionic matrices. In this case we show that six
words suffice (see Theorem \ref{unitsim2}), and that our set of six
words is minimal

In section 4 we consider two unit-speed one-parameter subgroups of
$\Sp(1),$ say
\[ \phi_p(s)=e^{ps},\quad \phi_q(t)=e^{qt}\]
where $p$ and $q$ are pure quaternions of norm 1. We prove (Theorem
\ref{uident}) that the real part of
\[ \prod_{i=1}^k\phi_p(s_i)\phi_q(t_i);\quad s_i,t_i\in\bR\]
remains the same when $p$ and $q$ are switched.

This fact remains true for arbitrary pure quaternions $p$ and $q$
provided we take $k=2$ and set $s_1=t_1$ and $s_2=t_2.$ 
{}From here we obtain an infinite family of trace identities for quaternions
(see Proposition \ref{uident2} and it's corollaries), which we were not
able to find anywhere in literature. For instance we show that
\[  \Tr \left(x^my^mx^ny^n\right) = \Tr \left(y^mx^my^nx^n\right)  \]
is valid for all quaternions $x,y$ and nonnegative integers $m,n$.
One can easilly convert these identities into trace identities for
$M_2(\bC).$

Section 5 is about the simultaneous triangularization of subalgebras
of $M_n$. In \cite{RR} Radjavi and Rosenthal give several
characterizations of triangularizability of unital subalgebras of
finite dimensional linear operators over an algebraically closed
field. By changing the equality in part (iv) of \cite[Theorem
1.5.4]{RR} to an inequality we are able to extend that result to the
quaternionic case (see Theorem \ref{nilpotent}). Next, we observe a
peculiar polynomial equation which is satisfied on any
triangularizable subalgebra $\pA\subseteq M_n$, namely that $\Tr
\left([X,Y]^3\right)=0$, and we investigate its significance. We introduce the
concept of quasi-triangularization (generalizing the
triangularization) which refers to the possibility of obtaining a
simultaneous block upper triangular form with the diagonal blocks
restricted to $M_1$ or $M_2(\bC)$. Based on our trace identity for
complex matrices given in section 4, we find that a unital
subalgebra $\pA\subseteq M_n$ is quasi-triangularizable if and only
if the identity $\Tr\left([X,Y]^3\right)=0$ is valid on $\pA$.

In section 6 we explore the semi-algebraic set $\pW_n$ of pairs of
quaternionic matrices which are simultaneously triangularizable.
Hence $\pW_n$ is generated from $\pU_n\times\pU_n$ via the
simultaneous conjugation action of the group $\GL_n(\bH).$ One can
replace here $\GL_n(\bH)$ by $\Sp(n)$ and deduce that $\pW_n$ is
closed (in the Euclidean topology). For generic $A\in M_n$, i.e.,
one with $n$ distinct eigenvalues, we find that the fibre of the
first projection $\pW_n\rightarrow M_n$ is the union of $n!$ real
vector spaces, each of dimension $2n(n+1),$ with a pairwise
intersection a common subspace of dimension $\geq4n$. We deduce that
the dimension of $\pW_n$ is $2n(3n+1)$. We also construct two
infinite families of polynomial equations (and some inequalities)
which are satisfied on $\pW_n$.

The problem of pairwise triangularizability in $M_2$ is of special
interest as the first nontrivial case of the above mentioned general
problem. Here, the set $\pW_2$ can be defined geometrically as the
set of quaternionic matrix pairs which share a common eigenvector.
In \cite{F} Friedland describes exactly when this occurs in the
complex case, see Theorem \ref{Friedland} below. In section 7, we
look at his result and attempt to extend it to the quaternionic
case. We now give some details.

Let $\pP_2$ be the algebra of real polynomial functions on
$M_2\times M_2,$ and $\pP_2'$ resp. $\pP_2''$ the subalgebra of
$\GL_2(\bH)$ resp. $\Sp(2)$-invariants. Let $\pI_2\subseteq\pP_2$ be
the ideal of functions that vanish on $\pW_2$, and set
$\pI_2'=\pI_2\cap\pP_2'$ and $\pI_2''=\pI_2\cap\pP_2''$. The Zariski
closure $\overline{\pW}_2$ of $\pW_2$ is the set of common zeros of
$\pI_2$, and the same is true for the ideal $\pI_2''$ of $\pP_2''$.
While the codimension of $\pW_2$ in $M_2\times M_2$ is only 4, we
can show that a minimal set of generators of $\pI_2'$ has cardinal
$\geq 92.$ Let $\pJ'_m\subseteq\pI_2'\subseteq\pP_2'$ be the ideal
of $\pP_2'$ generated by all polynomials $f\in\pI_2'$ of (total)
degree $\leq m.$ We have constructed a minimal set of generators of
$\pJ'_m$ for $m\leq 14$ (see Table 2 for the generators of
$\pJ'_9$).

In the last section, 8, we state four open problems.

We thank the referee and R. Guralnick for their valuable comments and suggestions.

\section{Preliminaries}

Let $\bH=\{a+\mathsf{i}b+\mathsf{j}c+\mathsf{k}d:a,b,c,d\in\bR\}$
represent the skew-field of real quaternions. We shall identify the
field $\bC$ with the subfield $\{a+\mathsf{i}b:a,b\in\bR\}$ of $\bH$.
For a quaternion $q=a+\mathsf{i}b+\mathsf{j}c+\mathsf{k}d$ we define
the \emph{norm}, \emph{real part}, \emph{pure part} and 
\emph{conjugate}\ of $q$ in the usual fashion as:
\begin{eqnarray*}
&& |q|^2:=a^2+b^2+c^2+d^2, \\
&& \Re(q):=a, \\
&& \mathsf{i}b+\mathsf{j}c+\mathsf{k}d \text{ and} \\
&& \conj{q}:=a-\mathsf{i}b-\mathsf{j}c-\mathsf{k}d,
\end{eqnarray*}
respectively. The \emph{adjoint} of a matrix $A\in M_n$ is given by
$A^*=\overline{A}^T$, which is also known as the conjugate-transpose
of $A$. With this, we can define the (compact) \emph{symplectic
group}, $\Sp(n)$, as the collection of quaternionic unitary
matrices, namely
$$\Sp(n):=\{U\in M_n:U^*U=I_n\},$$
where $I_n$ is the identity matrix see e.g. \cite{CC}. Consider the
equivalence relation $\sim$ defined on $M_n$ by the conjugation
action of $\Sp(n)$. To be precise, we have:
\[ A\sim B \leftrightarrow \exists U\in\Sp(n),\quad A=UBU^{-1}. \]
This shall be referred to as \emph{$\Sp(n)$-equivalence}. We will
speak of individual $\Sp(n)$-equivalence class for a given matrix
$A\in M_n$ and thus, denote this orbit by
$\pO_A=\{UAU^{-1}:U\in\Sp(n)\}$. It is well known that $M_n$ can be
embedded nicely into $M_{2n}(\bC)$. For this purpose, given $A\in
M_n$ we write $A=A_1+\mathsf{j}A_2$ with $A_1,A_2\in M_n(\bC).$ Then
\[ \chi_n:M_{n}\rightarrow M_{2n}(\bC),\quad
    \chi_n(A)=\begin{bmatrix} A_{1} & -\overline{A}_2\\
                              A_{2} &  \overline{A}_1\end{bmatrix}, \]
is an injective homomorphism of $\bR$-algebras. 
{}From this, the quaternionic matrices inherit various analogous properties
regarding invertibility, triangularizability, canonical forms, decomposition,
determinants, numerical range and more. See \cite{FZ} for detailed
results on quaternionic linear algebra.

Given $A\in M_n$, there exists $P\in\GL_n(\bH)$ such that
$PAP^{-1}\in \pU_n$ with successive diagonal entries
$\la_1,\dots,\la_n\in\bC$ and imaginary parts $\Im(\la_i)\geq0$. The
sequence $(\la_1,\dots,\la_n)$ is unique up to permutation and we
refer to $\la_1,\dots,\la_n$ as the \emph{eigenvalues} of $A$. We
shall use the word ``eigenvalues", in the context of quaternionic
matrices, exclusively in this sense. Note that the eigenvalues of
the complex matrix $\chi_n(A)$ are $\la_1,\dots,\la_n$ and
$\bar{\la}_1,\dots,\bar{\la}_n$ (counting multiplicities).

We seek to classify exactly when two $2\times2$ quaternionic
matrices are $\Sp(2)$-equivalent using polynomial functions which
remain constant on the equivalence classes. It is known that the
algebra of polynomial invariants for complex matrices under the
action of conjugation by unitary group is generated by a finite
number of particular \emph{known} trace functions on $M_n(\bC)$ for
$n=2,3,4$. Since $M_2$ embeds  into $M_4(\bC)$ as seen by $\chi_2$,
it is only natural to assume a classification of this type can be
extended in some way to the quaternionic case. That is, we should be
capable of defining explicitly which functions separate orbits.

First, we will need to introduce the notion of trace for quaternions
and matrices of such which will be preserved by $\chi_n$ above.
For the general definition of the reduced trace and the reduced
norm for central simple algebras we refer the reader to
\cite{RP,GS}.

\begin{definition}
The \emph{quaternionic trace} of $A=[a_{ij}]\in M_{n}$ is,
\[ \Tr(A):=\sum_{i=1}^n(a_{ii}+\bar{a}_{ii})=
2\Re\left( \sum_{i=1}^na_{ii}\right). \] In particular, when $n=1$
we have $$\Tr(q)=q+\bar{q}=2\Re(q)$$
\end{definition}

It is important to notice the clear distinction between the usual
trace, $\tr,$ on a matrix algebra over a field and the quaternionic
analog presented above. For instance, we have $\tr I_n = n$ while
$\Tr I_n = 2n$. Some properties which follow directly from our
definition as well as the simple fact that $\Re(pq)=\Re(qp)$ for
all quaternions $p,q$ are as follows:\\

Let $A,B\in M_n$, $P\in \GL_{n}(\bH)$, $U\in\Sp(n)$ and $w$ be any
word on two letters, then
\begin{enumerate}
    \item $\Tr (A) =\tr \chi_n(A)$
    \item $\Tr(A+B)=\Tr (A)+\Tr (B) $
    \item $\Tr (\la A) =\la \Tr (A) ,\quad \la\in\bR$
    \item $\Tr (AB) =\Tr (BA) $
    \item $\Tr \left(w(PAP^{-1},PBP^{-1})\right)=\Tr \left(w(A,B)\right)$
    \item $\Tr \left(w(UAU^*,UA^*U^*)\right)=\Tr \left(w(A,A^*)\right)$
\end{enumerate}

Observe, properties (2),(3) and (6) tell us that the quaternionic
trace of any $\bR$-linear combination of any words on $\{A,A^*\}$ is
invariant under the action of $\Sp(n)$.

\section{$\Sp(2)$-equivalence of matrices}
For our purposes it is essential to introduce the following six
trace polynomials on $M_2$
\[ (p_{1},p_{2},p_{3},p_{4},p_{5},p_{6}):=
    \Tr(A,A^2,A^3,A^4,AA^*,A^2 A^{*2}).\]
We proceed in showing that these form a minimal set of
$\Sp(2)$-invariants that separate orbits in $M_2$.

 Let us first describe a simple canonical form for
$2\times2$ quaternionic matrices under $\Sp(2)$-equivalence.

\begin{definition}\label{K}
Denote by $\pK$ the set of all matrices of the form
\[ A=\begin{bmatrix}\al & z\\0 & \be\end{bmatrix} \]
such that $\al,\be\in\mathbb{C}$; $\Im(\al),\Im(\be)\geqslant 0$;
$z=z_{1}+\mathsf{j}z_{3}$; $z_{1},z_{3}\geqslant 0$; and if $\al$,
or $\be$ is real then $z_3=0$.
\end{definition}

Notice that $\pK$ is a semi-algebraic set of dimension 6.
Eventually, we will see $\pK$ intersects each $\Sp(2)$-equivalence
class at either one or two points. The following result shows  that
$\pK$ meets every orbit $\pO_A$ at least once.

\begin{lemma}\label{uptri}
Every $2\times2$ quaternionic matrix is $\Sp(2)$-equivalent to some
matrix $A\in\pK$.
\end{lemma}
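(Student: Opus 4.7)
The plan is to reduce $A$ to upper triangular form by an $\Sp(2)$-conjugation (the quaternionic Schur decomposition), and then normalize the off-diagonal entry by a further diagonal $\Sp(2)$-conjugation, tracking carefully which diagonal conjugations are allowed depending on whether the eigenvalues are real.

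First, applying the complex Schur theorem to $\chi_2(A)\in M_4(\bC)$ and exploiting the fact that its eigenvalues come in conjugate pairs, together with $\chi_2(\Sp(2))$ being a natural subgroup of $\U(4)$, yields $U_1\in\Sp(2)$ such that
\[U_1 A U_1^{-1}=\begin{bmatrix}\alpha & w\\ 0 & \beta\end{bmatrix}, \quad \alpha,\beta\in\bC,\ \Im(\alpha),\Im(\beta)\ge 0.\]
Second, write $w=w_1+\mathsf{j}w_3$ with $w_1,w_3\in\bC$ and conjugate by a diagonal $U_2=\diag(u,v)\in\Sp(2)$. For the new diagonal entries $u\alpha u^{-1}$, $v\beta v^{-1}$ to remain of the allowed form, $u$ must centralize $\alpha$ in $\Sp(1)$, and analogously $v$ must centralize $\beta$; this centralizer is the unit circle $\U(1)\subset\bC$ when the eigenvalue is non-real, and all of $\Sp(1)$ when it is real.

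The remaining work is a case split on the reality of $\alpha,\beta$. If both are non-real, $u=e^{\mathsf{i}\mu}$ and $v=e^{\mathsf{i}\nu}$ are unit complex numbers; using $u\mathsf{j}=\mathsf{j}\bar u$, the new off-diagonal entry works out to $e^{\mathsf{i}(\mu-\nu)}w_1+\mathsf{j}\,e^{-\mathsf{i}(\mu+\nu)}w_3$, and choosing $\mu-\nu\equiv -\arg w_1$ and $\mu+\nu\equiv \arg w_3$ (modulo $2\pi$, with obvious modifications if $w_1$ or $w_3$ vanishes) achieves $z_1,z_3\ge 0$ simultaneously. If at least one eigenvalue is real, say $\alpha$, then $u$ can be any unit quaternion while $v$ is still constrained as before; picking $v=1$ and $u=\bar w/|w|$ (or $u=1$ if $w=0$) sends $w$ to the non-negative real number $|w|$, giving $z_1=|w|$ and $z_3=0$ as required. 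The main delicacy is precisely this bookkeeping on allowed $(u,v)$ — which is also why the definition of $\pK$ prescribes the stronger condition $z_3=0$ only when one of $\alpha,\beta$ is real.
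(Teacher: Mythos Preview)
Your proof is correct and follows essentially the same approach as the paper's: first invoke the quaternionic Schur theorem to reach upper triangular form with complex diagonal entries in the closed upper half-plane, then conjugate by a diagonal element $\diag(u,v)\in\Sp(2)$ to normalize the off-diagonal entry, with a case distinction according to whether an eigenvalue is real. The paper's version is slightly terser (it does not spell out the centralizer constraint on $u,v$ and cites the quaternionic Schur theorem directly from \cite{FZ} rather than via $\chi_2$), but the argument is the same.
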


\begin{proof}
First, by the generalization of Schur's theorem for quaternionic
matrices found in \cite{FZ} any matrix in $M_2$ is
$\Sp(2)$-equivalent to a matrix of the form
\[ \begin{bmatrix}\al & z\\0 & \be\end{bmatrix}, \]
with $\al,\be\in\bC$; and $\Im(\al),\Im(\be)\geqslant 0$. If we
write $z=c_{1}+\mathsf{j}c_{2}\text{; }c_{1},c_{2}\in\bC$, we can
reduce our matrix as follows,
    \begin{eqnarray*}
        \begin{bmatrix}u & 0\\0 & v\end{bmatrix}
        \begin{bmatrix}\al & c_{1}+\mathsf{j}c_{2}\\0 & \be\end{bmatrix}
        \begin{bmatrix}u^{-1}& 0\\0 & v^{-1}\end{bmatrix}
        &=&
        \begin{bmatrix}\al & |c_{1}| + \mathsf{j}|c_{2}|\\0 &
        \be\end{bmatrix}.
    \end{eqnarray*}
    It suffices to choose unit complex numbers $u$ and $v$ such that
    $uc_{1}v^{-1}=|c_{1}|$ and
    $\bar{u}c_{2}v^{-1}=|c_{2}|,$
    which is always possible. If, say, $\be$ is real then we can set
    $u=1$ and choose a unit quaternion $v$ such that $zv^{-1}$ is
    real nonnegative.
\end{proof}

Now we provide a technical result which simplifies the form of $p_6$
when restricted to $\pK$. This will be useful for later
computations.

\begin{lemma}\label{inv}
    Let $A=\begin{bmatrix}\al & z\\0 & \be\end{bmatrix}$ be as
    in Definition \ref{K}. In particular $z=z_1+\mathsf{j}z_3$ with
    $z_1,z_3\geqslant 0$. Then,
    \[ \frac{1}{2}p_{6}(A)=|\al|^4 + |\be|^4 + |\al+\bar{\be}|^2|z|^2 +
        z_{1}^2(\al-\bar\al)(\bar\be-\be). \]
\end{lemma}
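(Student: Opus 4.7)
The plan is to expand $p_6(A) = \Tr(A^2 A^{*2})$ directly from the definition, using that $A$ is upper triangular to keep the bookkeeping short, and then reduce the resulting quaternionic expression with the single non-trivial commutation rule $\mathsf{j}c = \bar c\,\mathsf{j}$ for $c\in\bC$.

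First I would record $A^* = \begin{bmatrix}\bar\alpha & 0 \\ \bar z & \bar\beta\end{bmatrix}$ and compute
\[
A^2 = \begin{bmatrix}\alpha^2 & \alpha z + z\beta \\ 0 & \beta^2\end{bmatrix}, \qquad
(A^*)^2 = \begin{bmatrix}\bar\alpha^2 & 0 \\ \bar z\bar\alpha + \bar\beta\bar z & \bar\beta^2\end{bmatrix},
\]
being careful not to commute quaternionic factors. The diagonal entries of $A^2(A^*)^2$ are then $|\alpha|^4 + (\alpha z + z\beta)(\bar z\bar\alpha + \bar\beta\bar z)$ and $|\beta|^4$. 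Applying $\Tr(M) = 2\Re\bigl(\sum m_{ii}\bigr)$, I obtain
\[
\tfrac{1}{2}p_6(A) = |\alpha|^4 + |\beta|^4 + \Re\bigl[(\alpha z + z\beta)(\bar z\bar\alpha + \bar\beta\bar z)\bigr].
\]

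Next I would expand the product into four terms. Two of them involve $z\bar z = |z|^2$ (a real number, hence central), giving $\alpha z\bar z\bar\alpha = |\alpha|^2|z|^2$ and $z\beta\bar\beta\bar z = |\beta|^2|z|^2$. The remaining pair, $\alpha z\bar\beta\bar z$ and $z\beta\bar z\bar\alpha$, are quaternionic conjugates of one another, so their sum equals $2\Re(\alpha z\bar\beta\bar z)$. Substituting $z = z_1 + \mathsf{j}z_3$ with $z_1,z_3\in\bR$ and using $\mathsf{j}\bar\beta = \beta\mathsf{j}$, a short calculation yields
\[
z\bar\beta\bar z = z_1^2\bar\beta + z_3^2\beta + z_1z_3(\beta-\bar\beta)\mathsf{j}.
\]
Since $\alpha(\beta-\bar\beta)\in\bC$, the coefficient of $\mathsf{j}$ after multiplying by $\alpha$ on the left contributes no real part, and thus $\Re(\alpha z\bar\beta\bar z) = z_1^2\Re(\alpha\bar\beta) + z_3^2\Re(\alpha\beta)$.

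Putting this together, $\tfrac{1}{2}p_6(A)$ equals
\[
|\alpha|^4 + |\beta|^4 + (|\alpha|^2+|\beta|^2)|z|^2 + 2z_1^2\Re(\alpha\bar\beta) + 2z_3^2\Re(\alpha\beta).
\]
To finish, I would rearrange using $|\alpha+\bar\beta|^2 = |\alpha|^2 + |\beta|^2 + 2\Re(\alpha\beta)$ and $|z|^2 = z_1^2 + z_3^2$, whence
\[
|\alpha+\bar\beta|^2|z|^2 = (|\alpha|^2+|\beta|^2)|z|^2 + 2(z_1^2+z_3^2)\Re(\alpha\beta).
\]
The discrepancy with my expression is $2z_1^2\bigl[\Re(\alpha\bar\beta) - \Re(\alpha\beta)\bigr] = 2z_1^2\Re(\alpha(\bar\beta-\beta)) = 4z_1^2\Im(\alpha)\Im(\beta)$, which coincides with $z_1^2(\alpha-\bar\alpha)(\bar\beta-\beta)$ because $(\alpha-\bar\alpha)(\bar\beta-\beta) = (2\mathsf{i}\Im(\alpha))(-2\mathsf{i}\Im(\beta)) = 4\Im(\alpha)\Im(\beta)$.

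The only point requiring any care is the non-commutativity in the step that expands $z\bar\beta\bar z$; everything else is routine rearrangement. The hypothesis $z_3\in\bR$ (and the reality of $z_1$) is used implicitly throughout so that $\bar z = z_1 - \mathsf{j}z_3$ and $z_1^2,z_3^2$ appear cleanly; note that the nonnegativity of $z_1,z_3$ plays no role here, so the identity in fact holds for any $z = z_1 + \mathsf{j}z_3$ with $z_1,z_3\in\bR$.
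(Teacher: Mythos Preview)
Your proof is correct and follows essentially the same approach as the paper: both compute $\Tr(A^2A^{*2})$ by reducing to $2\Re(\alpha z\bar\beta\bar z)$, expand $z\bar\beta\bar z$ using $\mathsf{j}c=\bar c\,\mathsf{j}$, discard the $\mathsf{j}$-component as contributing no real part, and then perform the same algebraic regrouping. Your write-up is slightly more explicit about the matrix products and the final rearrangement via $|\alpha+\bar\beta|^2$, and your observation that only $z_1,z_3\in\bR$ (not their nonnegativity) is needed is a nice extra remark.
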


\begin{proof} We have
    \[ p_{6}(A)=\Tr(A^2 A^{*2})=2\left(|\al|^4 +|\be|^4 +
        (|\al|^2 + |\be|^2 )|z|^2 +2\Re(\al z\bar{\be}\bar{z})\right) \]
    and
    \[ z\bar{\be} \bar{z}=(\bar{\be}z_{1}^2 + \be z_{3}^2) +
        \mathsf{j}z_1 z_3 (\be + \bar{\be}). \]
    Notice that the second term
    above is a pure quaternion even upon multiplication by $\al$ and so
    has zero real part. So
    \begin{align*}
        2\Re(\al z\bar{\be}\bar{z})&=z_1^2(\al\bar{\be}+\bar{\al}\be)+z_3^2(\al\be+\bar{\al}\bar{\be})&\\
        &=|z|^2(\al\be+\bar{\al}\bar{\be})+z_1^2(\al\bar{\be}+\bar{\al}\be-\al\be+\bar{\al}\bar{\be})&\\
        &=|z|^2(\al\be+\bar{\al}\bar{\be})+z_1^2(\al-\bar{\al})(\bar{\be}-\be).
    \end{align*}
\end{proof}

With this formulation in place, we can classify $\Sp(2)$-equivalence
between matrices which lie in $\pK$. In fact we shall prove that an
orbit $\pO_A$ meets $\pK$ in two points when $A$ has distinct
eigenvalues and in a single point otherwise.

\begin{theorem}\label{unitsim}
    If $A=\begin{bmatrix}\al & z\\0 & \be\end{bmatrix}$ and
    $B=\begin{bmatrix}\gamma & w\\0 & \delta\end{bmatrix}$ belong to
    $\pK,$ then
    \[ A\sim B \iff  z=w\ \&\ \{\al,\be\}=\{\gamma,\delta\}. \]
\end{theorem}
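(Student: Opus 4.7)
My plan is to prove the two implications separately: the forward direction via the invariants $p_5$ and $p_6$, and the reverse via the flexibility of the quaternionic Schur theorem combined with a bootstrap from the already-proved forward direction.

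For $(\Rightarrow)$, the quaternionic eigenvalues are $\Sp(2)$-invariant (since $\chi_2(UAU^*)$ and $\chi_2(A)$ have the same complex spectrum, from which $\{\alpha,\beta\}$ is recovered by taking representatives with $\Im\ge 0$), so $\{\alpha,\beta\}=\{\gamma,\delta\}$. On $\pK$ one directly computes $p_5=2(|\alpha|^2+|\beta|^2+|z|^2)$, yielding $|z|^2=|w|^2$. In Lemma \ref{inv}, all summands of $p_6$ except the last are symmetric under swapping the two diagonal entries, and $(\alpha-\bar\alpha)(\bar\beta-\beta)$ simplifies to the non-negative real $4\,\Im(\alpha)\Im(\beta)$. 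Equating $p_6(A)=p_6(B)$ and cancelling the symmetric part reduces to the single scalar identity $z_1^2\,\Im(\alpha)\Im(\beta)=w_1^2\,\Im(\gamma)\Im(\delta)$. I would then split into cases: if both $\alpha$ and $\beta$ are non-real, $\Im(\alpha)\Im(\beta)>0$ forces $z_1=w_1$ (both $\ge 0$) and then $z_3=w_3$ from the norm; if instead one eigenvalue is real, the definition of $\pK$ forces $z_3=w_3=0$ and the norm equation gives $z_1=w_1$. Either way $z=w$.

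For $(\Leftarrow)$, if $(\alpha,\beta)=(\gamma,\delta)$ then $A=B$, so only the case $(\alpha,\beta)=(\delta,\gamma)$ with $\alpha\ne\beta$ needs work. I would invoke the quaternionic Schur theorem (as cited in the proof of Lemma \ref{uptri}) with the diagonal ordering that places $\beta$ in the top-left entry, producing $U\in\Sp(2)$ with $UAU^*=\begin{bmatrix}\beta & c\\ 0 & \alpha\end{bmatrix}$ for some $c\in\bH$. Conjugating by a diagonal unitary $\mathrm{diag}(u,v)$ with unit $u,v\in\bH$ as in the proof of Lemma \ref{uptri} reduces this to a matrix $C=\begin{bmatrix}\beta & z'\\ 0 & \alpha\end{bmatrix}\in\pK$, and $C\in\pO_A$ by construction. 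Applying the already-proved $(\Rightarrow)$ direction to $A$ and $C$ (both in $\pK\cap\pO_A$) forces $z'=z$, hence $C=B$ and $A\sim B$.

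The main obstacle is in the forward direction: the clause in the definition of $\pK$ forcing $z_3=0$ whenever one of $\alpha,\beta$ is real is essential to pin down $z_1$ and $z_3$ individually, since once $\Im(\alpha)\Im(\beta)=0$ the identity extracted from $p_6$ degenerates and only $|z|^2$ remains to constrain the two nonnegative reals. The reverse direction is essentially bookkeeping once one notices that Schur's theorem is indifferent to the diagonal order and the reduction in Lemma \ref{uptri} lands in $\pK$.
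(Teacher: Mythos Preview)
Your proof is correct and follows essentially the same route as the paper: the forward direction uses $p_5$ to get $|z|=|w|$ and Lemma~\ref{inv} to isolate $z_1$, and the reverse direction bootstraps by re-triangularizing $A$ with the swapped diagonal order, reducing into $\pK$, and then invoking the forward direction. Your handling of the degenerate case $\Im(\alpha)\Im(\beta)=0$ (where the $p_6$ identity degenerates and one must fall back on the clause $z_3=0$ in the definition of $\pK$) is in fact more explicit than the paper's own argument, which simply asserts that $p_6$ gives $z_1^2=w_1^2$ without isolating this case.
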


\begin{proof}
    When $A\sim B$ we get from $p_k(A)=p_k(B)$, $k\in\{1,2,3,4\}$, that
    the sets of eigenvalues, $\{\al,\be,\bar{\al},\bar{\be}\} \text{
    and }\{\gamma,\delta,\bar{\gamma},\bar{\delta}\}$, for
    $\chi_2(A),\chi_2(B)$ are the same. So we get that
    $\{\al,\be\}=\{\gamma,\delta\}$ as well. Also, from $p_5(A)=p_5(B)$
    which is $|\al|^2+|\be|^2+|z|^2=|\gamma|^2+|\delta|^2+|w|^2$, we can
    see that $|z|^2=|w|^2$.
    Recall that $z=z_1+\mathsf{j}z_3$ and $w=w_1+\mathsf{j}w_3$, where
    $z_1,z_3,w_1,w_3$ are real and nonnegative. Now, $p_6(A)=p_6(B)$ and
    Lemma \ref{inv} above show that $z_1^2=w_1^2$, and since both $z_1$
    and $w_1$ are nonnegative we have $z_1=w_1$. It follows that also $z=w$.

    To prove the converse, we may assume that $A\neq B$. Thus
    $\delta=\al,\gamma=\be$ and $\al\neq \be$. So, we know $A$ is
    $\Sp(2)$-equivalent to a matrix $A'=\begin{bmatrix}\be &
    w'\\0 & \al\end{bmatrix}\in\pK$, since Schur's theorem allows us to
    place the eigenvalues in any order along the diagonal. Hence, by
    the first part of the proof and the hypothesis we have $w'=z=w$ and so $A\sim A'=B$.
\end{proof}

With Lemma \ref{uptri} along with Theorem \ref{unitsim}, we have
reached the promised canonical form for $2\times2$ quaternionic
matrices under $\Sp(2)$-equivalence. It is unique up to permutation
of the diagonal entries.

Now we may begin looking to find polynomial invariants which
separate the $\Sp(2)$-equivalence classes. Also, it is ideal for
computational purposes to obtain the least number of these
polynomials which do the job.

\begin{theorem}\label{unitsim2}
    Two matrices $A,B \in M_{2}$ are $\Sp(2)$-equivalent if
    and only if the following six equations hold:
    \begin{eqnarray*}
    && \Tr \left(A^{i}\right)=\Tr \left(B^{i}\right),\,\ i \in\{1,2,3,4\};\\
    && \Tr \left(AA^{*}\right)=\Tr \left(BB^{*}\right);\\
    && \Tr \left(A^{2}A^{*^{2}}\right) =\Tr \left(B^{2}B^{*^{2}}\right),
    \end{eqnarray*}
    i.e., $p_k(A)=p_k(B)$ for $1\leq k\leq 6.$ Moreover, this is a
    minimal set of invariants with the mentioned property.
\end{theorem}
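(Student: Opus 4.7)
The forward implication is immediate: each $p_k$ is the trace of a fixed word in $A$ and $A^*$ (or simply a power of $A$), and hence by properties (5) and (6) of the quaternionic trace is constant on $\Sp(2)$-orbits.

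For the converse, the plan is to reduce to the canonical form set $\pK$ and reuse the computations already carried out in the proof of Theorem \ref{unitsim}. Concretely, Lemma \ref{uptri} furnishes $A',B'\in\pK$ with $A'\sim A$ and $B'\sim B$; by $\Sp(2)$-invariance of the $p_k$ we have $p_k(A')=p_k(B')$ for $k=1,\dots,6$. The first paragraph of the proof of Theorem \ref{unitsim} uses exactly these six equalities: the identities for $k=1,2,3,4$ amount to $\tr\chi_2(A')^k=\tr\chi_2(B')^k$, so by Newton's identities the $4\times 4$ complex matrices $\chi_2(A')$ and $\chi_2(B')$ have the same spectrum, which combined with the convention $\Im(\al),\Im(\be)\geq 0$ imposed by $\pK$ forces $\{\al,\be\}=\{\gamma,\delta\}$; the $k=5$ equality then gives $|z|=|w|$; and Lemma \ref{inv} applied to $p_6(A')=p_6(B')$ extracts $z_1=w_1$, hence $z=w$. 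The backward direction of Theorem \ref{unitsim} then yields $A'\sim B'$, so $A\sim B$.

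For the minimality claim I must produce, for each index $k\in\{1,\dots,6\}$, a pair $A,B\in M_2$ with $A\not\sim B$ and $p_j(A)=p_j(B)$ for every $j\neq k$. The main obstacle is that six distinct counterexamples are required, and they cannot all be produced by a single construction. The cases $k=5,6$ are the cleanest, obtained from pairs in $\pK$ sharing eigenvalue data but differing in the off-diagonal entry: for $k=5$, the pair $A=\diag(1,-1)$ and $B=\begin{bmatrix}1 & 1 \\ 0 & -1\end{bmatrix}$ lies in $\pK$, is inequivalent by Theorem \ref{unitsim}, and satisfies $\al+\bar\be=0$ with real eigenvalues, so Lemma \ref{inv} forces $p_6$ to agree while $p_5$ separates them; for $k=6$, pick $A,B\in\pK$ with repeated eigenvalue $\al=\be=\mathsf{i}$ and off-diagonal entries of equal norm but distinct values of $z_1$ (for instance $z=1$ versus $z=\mathsf{j}$), so that only the $z_1^2(\al-\bar\al)(\bar\be-\be)$ term of Lemma \ref{inv} discriminates them. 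For $k\in\{1,2,3,4\}$ I plan to use diagonal matrices with both eigenvalues on the upper unit semicircle $\{e^{\mathsf{i}\theta}:\theta\in[0,\pi]\}$, which forces $p_5$ and $p_6$ to take constant values, and then solve the three remaining equations $\cos(j\theta)+\cos(j\phi)=\cos(j\theta')+\cos(j\phi')$ for $j\in\{1,2,3,4\}\setminus\{k\}$ in the four angles $\theta,\phi,\theta',\phi'$; a dimension count guarantees nontrivial solutions, and explicit numerical values finish the argument.
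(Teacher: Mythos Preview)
Your argument for the equivalence itself is correct and is exactly the paper's proof: reduce both matrices to $\pK$ via Lemma \ref{uptri}, use $p_1,\dots,p_4$ to pin down the eigenvalue pair, then $p_5$ and Lemma \ref{inv} to recover $z$, and invoke Theorem \ref{unitsim}. Your minimality examples for $k=5$ and $k=6$ are also fine (the $k=6$ pair is in fact the paper's own example).

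The gap is in the minimality argument for $k\in\{1,2,3,4\}$. Restricting to diagonal matrices with both eigenvalues on the upper unit semicircle is too rigid: on that two-parameter family the four invariants $p_1,\dots,p_4$ are not independent. Writing $x=\cos\theta$, $y=\cos\phi$, one has $p_j/2=T_j(x)+T_j(y)$, and since $p_1,p_2$ determine the first two power sums of $(x,y)$, they determine the unordered pair $\{x,y\}$, hence $p_3$ and $p_4$ as well. Concretely, if $p_1$ and $p_2$ agree on two such matrices then all four $p_j$ agree, so for $k=3$ and $k=4$ no pair of this shape can separate only $p_k$. A short calculation in the symmetric functions $s=x+y$, $p=xy$ shows the same obstruction for $k=2$ (matching $p_1,p_3,p_4$ forces $s=s'$ and then $p=p'$). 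Thus your dimension count is illusory: the three equations you impose cut out nothing beyond the trivial $\{\theta,\phi\}=\{\theta',\phi'\}$ locus for $k=2,3,4$. Only $k=1$ survives within this ansatz.

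The paper's examples (Table 1) avoid this by leaving the unit-circle diagonal family: for $k=3$ it uses diagonal matrices whose eigenvalues have different moduli, and for $k=2$ and $k=4$ it uses genuinely upper triangular matrices with a nonzero off-diagonal entry (and, for $k=4$, a nontrivial $\mathsf{j}$-component). You will need to enlarge your search space in the same way and actually exhibit the pairs; the heuristic dimension count cannot replace this.
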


\begin{proof}
    First, if $A\sim B$, our trace equations are trivially satisfied as
    $ \Tr w(A,A^*) =\Tr w(B,B^*)$ for all words $w$ on two letters.

    Conversely, suppose the given set of traces match for $A$ and $B$.
    By Lemma \ref{uptri} we may assume that
    \[ A=\begin{bmatrix}\al&z\\0&\be\end{bmatrix},\,
        B=\begin{bmatrix}\gamma&w\\0&\delta\end{bmatrix}\in\pK. \]
    We know the first four invariants of $A,B$ uniquely determine the
    sets $\{\al,\be\}$, $\{\gamma,\delta\}$ respectively and since these
    invariants are the same, we have that these sets are equal. It
    remains to show that $z=w$ which can be done using $p_5$ and $p_6$.
    We have from $p_5(A)=p_5(B)$ that
    \[ |\al|^2+|\be|^2+|z|^2=|\gamma|^2+|\delta|^2+|w|^2. \]
    As $\{\al,\be\}=\{\gamma,\delta\}$, we have $|z|=|w|$. Similarly,
    from $p_6(A)=p_6(B)$ and Lemma \ref{inv} we get
    \begin{eqnarray*}
        && |\al|^4+|\be|^4+|\al+\bar{\be}|^2|z|^2+z_{1}^2(\al-\bar\al)(\bar\be-\be) \\
        && \quad =|\gamma|^4+|\delta|^4+|\gamma+\bar{\delta}|^2|z|^2+
        w_{1}^2(\gamma-\bar\gamma)(\bar\delta-\delta),
    \end{eqnarray*}
    and so $z_1^2=w_1^2$.
    {}From our description of $\pK$ we know $z_1,w_1\geq0$. 
    This implies that $z_1=w_1$ and thus, $z=w$. Now, it
    follows from Theorem \ref{unitsim} that $A\sim B$.

    Finally, to prove minimality, we provide pairs of matrices, each of
    which agree on all but one invariant from our list. In Table 1 the
    matrices in the $k$-th row have distinct values of $p_k$ only.
    \begin{center}
    \textbf{Table 1:} Examples for minimality
    $$
     \begin{array}{c c c}
        1.  & \left[\begin{array}{ c c }
               \sqrt{3}-\mathsf{i} & 0 \\0 &-\sqrt{3}+\mathsf{i}\end{array}\right]
            & \left[\begin{array}{ c c }
                -\sqrt{3}+\mathsf{i}&0\\0&-\sqrt{3}-\mathsf{i}\end{array}\right]\\ \\
        2. & \left[\begin{array}{ c c }
                2+\mathsf{i}&1\\0&-2-\mathsf{i}\end{array}\right]
            & \left[\begin{array}{ c c }
                    1+2\mathsf{i}&-1\\0&-1-2\mathsf{i}\end{array}\right]\\ \\
        3. & \left[\begin{array}{ c c }
                -1+2\mathsf{i} & 0 \\0 &1\end{array}\right]
            & \left[\begin{array}{ c c }
                    -1&0\\0&1+2\mathsf{i}\end{array}\right]\\ \\
        4. & \left[\begin{array}{ c c }
                0&\sqrt{6}+\mathsf{j}\\0&\sqrt{2}\mathsf{i}\end{array}\right]
            &\left[\begin{array}{ c c }
            \mathsf{i}&\sqrt{3}+2\mathsf{j}\\0&-\mathsf{i}\end{array}\right]\\ \\
        5. & \left[\begin{array}{ c c }
                0&0\\0&0\end{array}\right]
            & \left[\begin{array}{ c c }
                   0&1\\0&0\end{array}\right]\\ \\
        6. & \left[\begin{array}{ c c }
                \mathsf{i}&1\\0&\mathsf{i}\end{array}\right]
            & \left[\begin{array}{ c c }
                \mathsf{i}&\mathsf{j}\\0&\mathsf{i}\end{array}\right]\\
     \end{array}
    $$
    \end{center}
    Thus, we have shown that our set of invariants is minimal.
\end{proof}

Let us point out that the complex analog of this theorem is not valid.
The reason is that, in the complex case, the polynomial invariants
do not separate the orbits.

\section{Some trace identities for quaternions}

It is well known that every one parameter subgroup of $\Sp(1)$ has
the form
$$
\phi_p(s)=e^{sp}:=\sum_{i\geq0}\frac{1}{i!}s^ip^i, \quad s\in\bR,
$$
for a unique pure quaternion $p$. Note that $\phi_{\la
p}(s)=\phi_p(\la s)$ for all real $\la$ and
\begin{equation}\label{conj}
    u\phi_p(s)u^{-1}=\phi_{upu^{-1}}(s)
\end{equation}
for any nonzero quaternion $u$.

\begin{theorem}\label{uident}
    If $p$ and $q$ are pure unit quaternions, then
    \begin{equation}\label{onepsg}
         \Tr\left(\prod_{i=1}^k\phi_p(s_i)\phi_q(t_i)\right)=\Tr\left(\prod_{i=1}^k\phi_q(s_i)\phi_p(t_i)\right).
    \end{equation}
    is valid for any real numbers $s_1,\dots,s_k$ and $t_1,\dots,t_k$.
\end{theorem}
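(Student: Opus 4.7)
The plan is to exhibit a single unit quaternion $u \in \Sp(1)$ that simultaneously conjugates $p$ into $q$ and $q$ into $p$. Once this is in hand, (\ref{conj}) gives $u\phi_p(s)u^{-1} = \phi_q(s)$ and $u\phi_q(t)u^{-1} = \phi_p(t)$, so
$$u\left(\prod_{i=1}^k \phi_p(s_i)\phi_q(t_i)\right) u^{-1} = \prod_{i=1}^k \phi_q(s_i)\phi_p(t_i),$$
and invariance of $\Tr$ under conjugation (an immediate consequence of property (4) in Section 2) yields (\ref{onepsg}).

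For the construction, the useful observation is that if $u$ itself is a \emph{pure} unit quaternion, then $u^2 = -1$ is central, so conjugation by $u$ is an involution on $\bH$; in particular $upu^{-1} = q$ automatically forces $uqu^{-1} = p$. It therefore suffices to produce a pure unit quaternion $u$ with $upu^{-1} = q$. Geometrically, conjugation by a pure unit quaternion acts on the $3$-space of pure quaternions as the $180^\circ$ rotation about the axis $\bR u$, and this rotation swaps the unit vectors $p$ and $q$ exactly when $u$ lies along their bisector. Assuming $p \neq -q$, take $u = (p+q)/|p+q|$. A short computation using $p^2 = q^2 = -1$, $|p+q|^2 = 2 + 2\Re(\bar{p}q)$, and $qpq = p - 2\Re(\bar p q)\,q$ (obtained from $pq + qp = -2\Re(\bar p q)$) shows $upu^{-1} = q$, as required.

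The only case not handled is $p = -q$ (the case $p = q$ is trivial). There both $\phi_p(s_i)$ and $\phi_q(t_i) = \phi_p(-t_i)$ lie in the same abelian one-parameter subgroup, so the products collapse: writing $S = \sum s_i$ and $T = \sum t_i$, the left side of (\ref{onepsg}) becomes $\Tr \phi_p(S - T) = 2\cos(S - T)$ and the right side becomes $\Tr \phi_p(T - S) = 2\cos(T - S)$, which agree. (Alternatively, both sides of (\ref{onepsg}) are polynomials in the entries of $p$ and $q$, so the result extends from the dense open set $p \neq -q$ by continuity.) The main obstacle is spotting that a single conjugation can swap $p$ and $q$ at once; once the bisector $u = (p+q)/|p+q|$ is guessed, everything else is formal.
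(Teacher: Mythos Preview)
Your proof is correct and follows essentially the same strategy as the paper's: both exhibit a pure unit quaternion $u$ (the normalized bisector $(p+q)/|p+q|$ when $p+q\neq0$) whose conjugation is a $180^\circ$ rotation swapping $p$ and $q$, then apply (\ref{conj}) and trace invariance. The only cosmetic difference is the treatment of $p=-q$: the paper chooses $u$ orthogonal to $p$, while you dispose of this degenerate case by direct computation or continuity.
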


\begin{proof}
    Since $|p|=|q|=1$ there exists a $180^\circ$-rotation, in the
    3-dimen\-si\-on\-al space of pure quaternions, which interchanges $p$
    and $q$. Consequently, there exists a pure unit quaternion $u$
    such that $upu^{-1}=q$ and $uqu^{-1}=p$ (if $p+q\neq 0$ one can
    choose $u$ in the direction of $p+q$, and otherwise just to be
    orthogonal to $p$.) It suffices now to observe that conjugation
    by $u$ interchanges the two products in (\ref{onepsg}).
\end{proof}

The identity (\ref{onepsg}) is not valid for arbitrary pure
quaternions $p$ and $q$. Let us look at the special cases where
\begin{equation}\label{*}
    s_1=t_1,s_2=t_2,\dots,s_k=t_k.
\end{equation}

\begin{proposition}\label{uident2}
    Let $p$ and $q$ be arbitrary pure quaternions, then
    \begin{itemize}
      \item[(a)] For every $s,t\in\bR$, we have
        $$\Tr (\phi_p(s)\phi_q(s)\phi_p(t)\phi_q(t))=
                    \Tr (\phi_q(s)\phi_p(s)\phi_q(t)\phi_p(t));$$ 
	\item[(b)] When at least two of $r,s,t\in\bR$ are equal, we have
      \begin{eqnarray*}
		&& \Tr (\phi_p(r)\phi_q(r)\phi_p(s)\phi_q(s)\phi_p(t)\phi_q(t)) \\
            && \quad =\Tr (\phi_q(r)\phi_p(r)\phi_q(s)\phi_p(s)\phi_q(t)\phi_p(t)).
	\end{eqnarray*}
                    
    \end{itemize}
\end{proposition}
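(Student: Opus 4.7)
My plan is to prove (a) by constructing a carefully chosen involutory anti-automorphism of $\bH$ that fixes both $p$ and $q$, and then to deduce (b) from (a) via the Cayley--Hamilton-type quadratic relation satisfied by unit quaternions.

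For part (a), I would first dispose of the degenerate case where $p$ and $q$ are $\bR$-linearly dependent as pure quaternions: then $\phi_p(s)$ and $\phi_q(t)$ all commute and the identity follows from the cyclicity of $\Tr$. Otherwise, choose a unit pure quaternion $u$ in the direction of $p\times q$ (so $u$ is orthogonal, in the $3$-dimensional space of pure quaternions, to both $p$ and $q$), and define $\sigma\colon\bH\to\bH$ by $\sigma(x)=u\conj{x}u^{-1}$. I would then verify: (i) $\sigma$ is an $\bR$-algebra anti-automorphism, being the composition of quaternion conjugation with an inner automorphism; (ii) $\sigma^2=\mathrm{id}$, because $u^2=-1$; (iii) $\sigma(p)=p$ and $\sigma(q)=q$, since for a pure quaternion $v$ orthogonal to $u$ one has $uvu^{-1}=-v$, so $\sigma(v)=u\conj{v}u^{-1}=-uvu^{-1}=v$; and (iv) $\sigma$ preserves the real part, as it fixes $1$ and carries pure quaternions to pure quaternions. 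Property (iii) forces $\sigma$ to fix each $\phi_p(s)$ and each $\phi_q(t)$, since these are power series in $p$ and $q$ alone. Applying $\sigma$ to $\phi_p(s)\phi_q(s)\phi_p(t)\phi_q(t)$ and using anti-multiplicativity reverses the order of the factors to give $\phi_q(t)\phi_p(t)\phi_q(s)\phi_p(s)$; equating real parts (invariant under $\sigma$ by (iv)) and invoking cyclicity of $\Re$ yields (a).

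For part (b), set $U(s):=\phi_p(s)\phi_q(s)$ and $\tilde U(s):=\phi_q(s)\phi_p(s)$. Both are unit quaternions, hence each satisfies $X^2=\Tr(X)X-1$ in the commutative subring it generates (from $X\conj{X}=1$ and $X+\conj{X}=\Tr(X)$). By cyclicity of $\Tr$, I may assume without loss of generality that $r=s$, reducing the claim to $\Tr(U(r)^2 U(t))=\Tr(\tilde U(r)^2\tilde U(t))$. Substituting the quadratic relation on both sides reduces this to
\[
\Tr(U(r))\Tr(U(r)U(t))-\Tr(U(t))=\Tr(\tilde U(r))\Tr(\tilde U(r)\tilde U(t))-\Tr(\tilde U(t)),
\]
where the single-factor traces agree by cyclicity of $\Tr$ and the two-factor traces agree by part (a). The remaining cases $r=t$ and $s=t$ follow from the $r=s$ case by cyclically rotating the three blocks $U(r),U(s),U(t)$.

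The most delicate step, I expect, will be the verification of the four properties of $\sigma$---in particular the computation $uvu^{-1}=-v$ for pure $u,v$ with $u$ a unit vector orthogonal to $v$; this is the concrete geometric mechanism behind the reflection of $\bH$ across the hyperplane $\bR\{1,p,q\}$. Once this is granted, part (a) is a single formal manipulation with $\sigma$, and part (b) is a short algebraic reduction to (a) using the unit-quaternion quadratic.
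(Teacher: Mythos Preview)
Your proof is correct and takes a genuinely different route from the paper's.

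The paper proves (a) by brute force: it normalizes $p=\lambda\mathsf{i}$ and $q=\mu(\mathsf{i}\cos\rho+\mathsf{j}\sin\rho)$, expands $\phi_p(s)\phi_q(s)$ explicitly, and then verifies with Maple that the real part of the four-factor product is unchanged under $p\leftrightarrow q$. For (b) it simply says the proof is similar and omits the details. Your argument, by contrast, is conceptual. The anti-automorphism $\sigma(x)=u\bar{x}u^{-1}$ with $u$ a unit pure quaternion orthogonal to both $p$ and $q$ is exactly the reflection of $\bH$ across the real hyperplane $\bR\{1,p,q\}$; it fixes $\phi_p(s)$ and $\phi_q(t)$ individually, reverses products, and preserves real parts, so one line yields $\Re(ABCD)=\Re(DCBA)$, and two cyclic shifts give (a). This explains \emph{why} the identity holds rather than merely checking it, and it makes transparent why the hypothesis $s_i=t_i$ is needed: reversal followed by cycling matches the $p\leftrightarrow q$ swap precisely under that constraint. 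Your deduction of (b) from (a) via the unit-quaternion quadratic $X^2=\Tr(X)X-1$ is likewise cleaner than a second computation; the reduction of the cases $s=t$ and $r=t$ to the case $r=s$ by cycling is straightforward. One small remark: in the degenerate case you could alternatively just pick any unit pure $u$ orthogonal to the line $\bR p=\bR q$ and run the same argument, avoiding a case split; but your handling is also fine.
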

\begin{proof}
    Let us prove (a). We can write $p=\la p_0$ and $q=\mu q_0,$
where $p_0$ and $q_0$ are pure unit quaternions and $\la,\mu\geq0.$
{}From (\ref{conj}) we may assume that $p_0=\mathsf{i}$ and
$q_0=\mathsf{i}\cos\rho+\mathsf{j}\sin\rho.$ Then we have
\[ \phi_p(s)=\phi_{\la p_0}(s)=\phi_{p_0}(\la s)=\cos\la s + \mathsf{i}\sin\la s, \]
\[ \phi_q(s)=\cos\mu s + (\mathsf{i}\cos\rho + \mathsf{j}\sin\rho)\sin\mu s. \]
Next we get that
\begin{align*}
    \phi_p(s)\phi_q(s)=&\cos \la s \cos \mu s  -\sin \la s \sin \mu s \cos \rho &\\
    &+ \textsf{i}(\cos \la s \sin \mu s \cos \rho + \sin \la s \cos \mu s) &\\
    &+ \textsf{j}\cos \la s \sin \mu s \sin \rho &\\
    &+ \textsf{k}\sin \la s \sin \mu s \sin \rho .&
\end{align*}
{}From here, we compute the product
$\phi_p(s_1)\phi_q(s_1)\phi_p(s_2)\phi_q(s_2)$ and verify (we did it
using Maple) that its real part remains the same when $p$ and $q$
are switched. Hence, proving part (a).

The proof of the part (b) is similar and we omit the details.
\end{proof}

A discrete version of the last proposition can be extended to
arbitrary quaternions $x,y$ by considering their polar
decompositions. Thus, we have the following corollary.

\begin{cor}\label{qident}
    Let $m,n,r$ be nonnegative integers. Then
    \[ \Tr \left(x^my^mx^ny^n\right) = \Tr \left(y^mx^my^nx^n\right) \]
    is valid for all $x,y\in\bH.$ If $m,n,r$ are not distinct, then
    \[ \Tr \left(x^my^mx^ny^nx^ry^r\right) = \Tr \left(y^mx^my^nx^ny^rx^r\right) \]
    is also valid.
\end{cor}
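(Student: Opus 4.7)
The plan is to reduce both identities to Proposition~\ref{uident2} by means of polar decomposition of quaternions. For any nonzero $x \in \bH$, the unit quaternion $x/|x|$ lies on some one-parameter subgroup of $\Sp(1)$, so we may write $x/|x| = e^p$ for a suitable pure quaternion $p$; likewise $y/|y| = e^q$ with $q$ pure. Then for every nonnegative integer $m$,
\[ x^m = |x|^m e^{mp} = |x|^m \phi_p(m), \qquad y^m = |y|^m \phi_q(m). \]

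Since $|x|$ and $|y|$ are real scalars, they commute with every quaternion and pass freely through the quaternionic trace. Hence
\[ \Tr(x^m y^m x^n y^n) = |x|^{m+n} |y|^{m+n} \Tr\bigl(\phi_p(m)\phi_q(m)\phi_p(n)\phi_q(n)\bigr), \]
and the analogous formula for $y^m x^m y^n x^n$ has $p$ and $q$ swapped in the product on the right. Applying Proposition~\ref{uident2}(a) with the real values $s = m$ and $t = n$ now yields the first identity. The second identity follows in exactly the same way, applying Proposition~\ref{uident2}(b) to the triple $(m, n, r)$: the proposition's hypothesis ``at least two of $r, s, t$ are equal'' is precisely the corollary's requirement that $m, n, r$ are not distinct.

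The remaining bookkeeping is routine. If $x = 0$ (or $y = 0$), the polar decomposition is unavailable, but the identity is trivial: either some positive power of the vanishing variable occurs on both sides, making both traces zero, or all of its exponents are $0$ and that variable does not appear at all. I do not expect a genuine obstacle here; the essential insight is simply that polar decomposition converts arbitrary quaternions into positive-real multiples of one-parameter-subgroup values, after which Proposition~\ref{uident2} applies verbatim.
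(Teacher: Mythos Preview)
Your proof is correct and follows essentially the same approach as the paper: the paper states that the corollary is obtained from Proposition~\ref{uident2} ``by considering their polar decompositions,'' which is exactly the reduction you carry out in detail.
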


As a result of this identity for quaternions we obtain another
important corollary about $M_{2}(\mathbb{C})$. Take note the trace
below is the standard trace, as denoted by the lowercase tr.

\begin{cor}\label{disc2}
    Let $m,n,r$ be nonnegative integers. Then
    \[ \tr \left(x^my^mx^ny^n\right) = \tr \left(y^mx^my^nx^n\right) \]
    is valid for all $x,y\in M_2(\bC).$ If $m,n,r$ are not distinct , then
    \[ \tr \left(x^my^mx^ny^nx^ry^r\right) = \tr \left(y^mx^my^nx^ny^rx^r\right) \]
    is also valid.
\end{cor}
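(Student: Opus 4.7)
The plan is to transport the quaternionic identities of Corollary~\ref{qident} to $M_2(\bC)$ via the $\bR$-algebra embedding $\chi_1\colon\bH\hookrightarrow M_2(\bC)$, and then extend from $\chi_1(\bH)$ to all of $M_2(\bC)$ by a Zariski-density argument. Since $\chi_1$ is a ring homomorphism and $\Tr(A) = \tr\chi_1(A)$ for $A\in\bH$ (trace property~(1)), we have $\Tr(w(x,y)) = \tr(w(\chi_1(x),\chi_1(y)))$ for every word $w$ in two letters and every $x,y\in\bH$. Applying this to Corollary~\ref{qident} immediately yields both identities of Corollary~\ref{disc2} on pairs $(X,Y)\in\chi_1(\bH)\times\chi_1(\bH)\subset M_2(\bC)\times M_2(\bC)$.

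To extend from $\chi_1(\bH)\times\chi_1(\bH)$ to $M_2(\bC)\times M_2(\bC)$, observe that each of the differences
\[
F_1(X,Y) := \tr(X^m Y^m X^n Y^n) - \tr(Y^m X^m Y^n X^n)
\]
and (in the second case)
\[
F_2(X,Y) := \tr(X^m Y^m X^n Y^n X^r Y^r) - \tr(Y^m X^m Y^n X^n Y^r X^r)
\]
is a complex polynomial in the eight complex entries of $X$ and $Y$. Hence it suffices to show that $\chi_1(\bH)$ is Zariski-dense in $M_2(\bC)$, viewed as a 4-dimensional complex affine space. A direct check shows that $\chi_1(1),\chi_1(\mathsf{i}),\chi_1(\mathsf{j}),\chi_1(\mathsf{k})$ are $\bC$-linearly independent and hence form a $\bC$-basis of $M_2(\bC)$; thus the real subspace $\chi_1(\bH)$ has complex span equal to $M_2(\bC)$. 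Expanding $F_1$ (or $F_2$) in the coordinates relative to this basis, for each of the variables $X$ and $Y$, turns it into a complex polynomial in eight variables that vanishes whenever all eight variables are real; such a polynomial must vanish identically.

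The only substantive step beyond Corollary~\ref{qident} is the Zariski-density argument, which is routine linear algebra; the rest is a straightforward translation via $\chi_1$.
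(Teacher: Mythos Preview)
Your proof is correct and follows essentially the same approach as the paper. The paper phrases the key step as the direct sum decomposition $M_2(\bC)=\chi_1(\bH)\oplus\mathsf{i}\chi_1(\bH)$ and then invokes that a complex analytic polynomial in eight indeterminates vanishing on $\chi_1(\bH)$ must vanish identically; your version, observing that $\chi_1(1),\chi_1(\mathsf{i}),\chi_1(\mathsf{j}),\chi_1(\mathsf{k})$ is a $\bC$-basis and that a complex polynomial vanishing on all real values of the coordinates is identically zero, is exactly the same argument spelled out in slightly more detail.
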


\begin{proof}
We have a direct decomposition
$M_2(\bC)=\chi_1(\bH)\oplus\mathsf{i}\chi_1(\bH)$. Since the left
hand side is a complex analytic polynomial (in 8 indeterminates)
which we know from Corollary \ref{qident} vanishes on $\chi_1(\bH)$,
it follows that this polynomial must be identically zero and our
identity holds.
\end{proof}

The referee supplied a simple proof of these two corollaries
for any algebra with trace. For simplicity we sketch his argument
in the setting of Corollary \ref{qident}.
By expanding products, one can easily check that the identity
\[ \Tr((a+bx)(c+dy)(e+fx)(g+hy))=
	\Tr((c+dy)(a+bx)(g+hy)(e+fx)) \]
holds for all real scalars $a,b,c,d,e,f,g,h$ and quaternions
$x,y$. This implies the first identity since $x^m=u+vx$ etc
for some real scalars $u,v$. The second identity can be deduced
from the first by using the fact that the elements of $\bH$ are
quadratic over $\bR$.

For convenience we shall use the standard notation for the commutator.
That is $[A,B]:=AB-BA.$ Notice that
\begin{eqnarray}\label{jed}
    \Tr\left([A,B]^3\right) &=& 3\Tr(A^2B^2AB-B^2A^2BA) \\
	\nonumber &=& -3\Tr \left(AB^2A[A,B]\right),
\end{eqnarray}
as this will be useful for further results.

\section{Triangularizable subalgebras of $M_n$}

A set $\pS\subseteq M_n(D)$ is \emph{triangularizable} if there is a
$P\in \GL_n(D)$ such that $P\pS P^{-1}\subseteq \pU_n(D)$. In the
recent book \cite{RR} one can find several characterizations of
triangularizable complex subalgebras of $M_n(\bC)$. Some of these
results can be easily transferred to the quaternionic case while
others are no longer valid. In particular, it is trivial to see that
the next proposition does not hold for the quaternionic case. First,
we introduce the notion of permutable functions.

\begin{definition}\label{ptrace}
    Let $\pS\subseteq M_n(D)$ be a collection of matrices. For a
    function $f$ on $M_n(D)$, taking values in the center of $D$,
    we say $f$ is \emph{permutable on} $\pS$ if
    $$
        f(A_1A_2\dots A_k)=
        f(A_{\sigma(1)}A_{\sigma(2)}\dots A_{\sigma(k)})
    $$
    for all $A_1,A_2,\dots ,A_k\in\pS$ and all permutations
    $\sigma$ of $\{1,2,\dots,k\}$.
\end{definition}

One can find in \cite{HR} the following characterization of
triangularizable subalgebras of $M_n(\bC)$. See \cite{GG} for
generalization to other fields.

\begin{proposition}\label{permutable trace}
    Let $\pA\subseteq M_n(\bC)$ be a unital complex subalgebra. Then $\pA$
    is triangularizable if and only if trace is permutable on $\pA$.
\end{proposition}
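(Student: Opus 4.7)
The plan is to handle the two implications separately.

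For the easy direction, I would first reduce to $\pA\subseteq\pU_n(\bC)$ by conjugating, then observe that any product $B_1\cdots B_k$ of upper triangular matrices is upper triangular with $(j,j)$-entry $\prod_{i=1}^k(B_i)_{jj}$, so
$\tr(B_1\cdots B_k)=\sum_{j=1}^n\prod_{i=1}^k(B_i)_{jj}$,
which is manifestly symmetric in the factors $B_i$. Conjugation invariance of $\tr$ then transfers permutability back to $\pA$.

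For the converse, suppose $\tr$ is permutable on $\pA$. Specializing the hypothesis to the list $(A,B,C_1,\ldots,C_r)$ with the transposition swapping the first two entries gives $\tr([A,B]C_1\cdots C_r)=0$ for all $A,B,C_i\in\pA$. Using cyclicity of $\tr$, this extends to $\tr(P[A,B]Q\cdot M)=\tr([A,B]QMP)=0$ for all $P,Q,M\in\pA$, so by bilinearity $\tr(XM)=0$ for every $M\in\pA$ and every $X$ in the two-sided ideal $\pJ$ of $\pA$ generated by the commutators $[A,B]$. Specializing further to $M=X^{k-1}\in\pA$ yields $\tr(X^k)=0$ for every $k\geq 1$, and the standard trace criterion over $\bC$ then forces every $X\in\pJ$ to be nilpotent.

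To finish, I would pass from ``$\pJ$ is a nil ideal'' to ``$\pA$ is triangularizable'' by induction on $n$. The classical fact that a nil ideal of a finite-dimensional associative algebra is automatically nilpotent gives $\pJ^{s}=0$ for some $s\geq 1$. If $\pJ=0$ then $\pA$ is commutative and admits a common eigenvector, producing a $1$-dimensional invariant subspace. Otherwise $\pJ\cdot\bC^n$ is a nonzero proper $\pA$-invariant subspace: it is nonzero because a nonzero $X\in\pJ$ acts nontrivially, and it is proper because $\pJ\cdot\bC^n=\bC^n$ would iterate to $\pJ^s\cdot\bC^n=\bC^n$, contradicting $\pJ^s=0$. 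Either way I obtain a nontrivial $\pA$-invariant flag, and the induction hypothesis applied to the two pieces finishes the proof.

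The principal obstacle is this last step: the reduction of permutability to a nil commutator ideal is essentially bookkeeping with transpositions and the cyclic property of $\tr$, but the invariant-subspace argument relies on the nontrivial structural fact that nil ideals in finite-dimensional algebras are nilpotent (equivalently, one could invoke Burnside's theorem and argue that a simple subquotient of dimension $\geq 2$ would force the image of $\pJ$ in some $M_d(\bC)$ to be either zero or all of $M_d(\bC)$, both of which are ruled out).
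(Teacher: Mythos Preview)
The paper does not prove this proposition: it is quoted as a known result from Radjavi \cite{HR} (with \cite{GG} for general fields), included mainly to contrast with the quaternionic case, where $\Tr$ fails to be permutable even on $\pU_n$. There is therefore no proof in the paper to compare against.

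Your argument is correct. The forward direction is the standard diagonal-entry computation. For the converse, the reduction from permutability to $\tr(XM)=0$ for all $X$ in the commutator ideal $\pJ$ and all $M\in\pA$, and hence to nilpotency of every element of $\pJ$, is exactly Radjavi's idea. One small remark on the finishing step: your induction on $n$ is really on the auxiliary statement ``$\pJ$ nil $\Rightarrow$ $\pA$ triangularizable'' rather than on permutability itself (permutability of $\tr$ does not obviously pass to an invariant subspace, since $\tr_V$ differs from $\tr$), so you should check that nil-ness of the commutator ideal is inherited by the image algebras on $V$ and on $\bC^n/V$; it is, because the commutator ideal of a homomorphic image is the image of the commutator ideal, and images of nilpotents are nilpotent. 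Your parenthetical Burnside alternative sidesteps this bookkeeping and is the cleaner finish, in the same spirit as the paper's own arguments for Proposition~\ref{subalgebra} and Theorems~\ref{nilpotent} and~\ref{qt}: once $\pJ$ is nil it lies in the radical $\pR$, so $\pA/\pR$ is a commutative semisimple $\bC$-algebra, every irreducible quaternionic---here simply complex---$\pA$-module is one-dimensional, and a Jordan--H\"older series of $\bC^n$ gives the triangularizing flag directly.
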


The analogous assertion for (real) subalgebras of $M_n$ is invalid
because $\Tr$ is not permutable on $\pU_n$. This is due to the lack
of commutativity of $\bH$.

To prove our result giving a characterization of
triangu\-la\-ri\-za\-ble subalgebras of $M_n$ we shall use the
concept of quaternionic representations of real algebras.

\begin{definition} \label{quatrep}
    Let $\pA$ be an associative unital $\bR$-algebra. A
    \emph{quaternionic representation} of $\pA$ is a $\bR$-algebra
    homomorphism
    \[ \rho:\pA\rightarrow \End_{\bH}(\pV), \]
    where $\pV$ is a right quaternionic vector space. We also say that
    $\pV$ is a \emph{quaternionic (left) $\pA$-module}. We say that
    $\rho$ is \emph{irreducible} if $\pV$ is non-zero and has no proper
    non-zero $\pA$-invariant quaternionic subspaces.
\end{definition}

Let us briefly outline the basic facts regarding quaternionic
representations of finite-dimensional unital $\bR$-algebras $\pA$.
If $\pR$ is the radical of $\pA$, then
$\pA/\pR\cong\pA_1\times\dots\times\pA_s$, where each $\pA_i$ is a
simple algebra. Thus, each $\pA_k$ is isomorphic to one of the
algebras $M_r(\bR)$, $M_r(\bC)$ or $M_r(\bH)$, for some integer
$r\ge1$. In each of these three cases, the right quaternionic space
$\bH^r$ (column vectors) is an irreducible quaternionic
$\pA_k$-module, and also an irreducible quaternionic $\pA$-module.
In this way we obtain all irreducible quaternionic $\pA$-modules (up
to isomorphism). Moreover, the $\pA$-modules arising for different
values of $k$ are pairwise non-isomorphic.

\begin{remark}
    Let us also mention another useful fact: There exists a
subalgebra $\mathcal{B}\subseteq\pA$ such that
$\pA=\pR\oplus\mathcal{B}$ see \cite[Wedderburn--Malcev Theorem,
p.209]{RP}.
\end{remark}

The following proposition plays a crucial role in the sequel.

\begin{proposition} \label{subalgebra}
    Let $\pA\subseteq M_n$ be a unital subalgebra and
    $\rho:\pA\rightarrow M_r$ an irreducible quaternionic
    representation. Let $p(x,y)$ be a polynomial, in two non-commuting
    variables $x$ and $y$, with real coefficients.
    If the inequality
    \[ \Tr \left(p(x,y)\right)\leq 0 \]
    is satisfied for all $x,y\in \pA$, then it is also satisfied for
    all $x,y\in \rho (\pA)$. The same assertion remains valid if the
    inequality sign is replaced by equality.
\end{proposition}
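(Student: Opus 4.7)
The plan is to exploit the Wedderburn--Malcev decomposition of $\pA$ mentioned in the remark above, together with the classification of irreducible quaternionic modules over a simple real algebra, in order to compare the trace on $M_n$ with the one on $M_r$ via a multiplicity. The same argument will handle both the inequality and equality versions uniformly.

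Write $\pA = \pR \oplus \pB$ with $\pB \cong \pA_1 \times \cdots \times \pA_s$ a product of simple real algebras. Since $\rho$ is irreducible, it must annihilate the nilpotent radical $\pR$ and factor through projection onto one simple factor, which, after relabelling, I may take to be $\pA_1$. The restriction $\rho|_{\pA_1}$ is then faithful, and because $\rho$ vanishes on every other summand, it is an algebra isomorphism $\pA_1 \xrightarrow{\sim} \rho(\pA)$.

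Next, regard $\pA_1$ itself as a subalgebra of $\pA \subseteq M_n$, and decompose the resulting right-quaternionic $\pA_1$-module $\bH^n$. By the structure theorem cited just before the proposition, every nonzero irreducible quaternionic $\pA_1$-module is isomorphic to the standard one of $\bH$-dimension $r$, so
\[
\bH^n \;\cong\; V^{\oplus m_1} \oplus W
\]
as an $\pA_1$-module, where $V \cong \bH^r$ carries the unique irreducible action and $W$ is the subspace on which the identity element $e_1$ of $\pA_1$ acts by zero. Because $\pA \hookrightarrow M_n$ is an embedding, $e_1$ is a nonzero matrix and hence $m_1 \geq 1$. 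An $\pA_1$-module isomorphism from $V$ to $\bH^r$ with the $\rho$-action is implemented by an element of $\GL_r(\bH)$, and so conjugation invariance of the quaternionic trace yields, for every $x \in \pA_1$,
\[
\Tr(x) \;=\; m_1 \, \Tr(\rho(x)),
\]
the left-hand side computed in $M_n$ and the right-hand side in $M_r$.

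To finish, given $x, y \in \rho(\pA)$, set $x' = (\rho|_{\pA_1})^{-1}(x)$ and $y' = (\rho|_{\pA_1})^{-1}(y)$, which lie in $\pA_1 \subseteq \pA$. Since $\rho$ is an algebra homomorphism and $p(x',y') \in \pA_1$, the displayed formula gives $\Tr(p(x',y')) = m_1\,\Tr(p(x,y))$; dividing by the positive integer $m_1$ converts the hypothesis $\Tr(p(x',y')) \leq 0$ into the desired $\Tr(p(x,y)) \leq 0$, and the equality case is verbatim. The step I expect to be the main obstacle is the clean justification of the trace-multiplicity identity, which requires both that the multiplicity $m_1$ is strictly positive (provided by faithfulness of $\pA \subseteq M_n$) and that the formula is uniform across the three possibilities $M_r(\bR)$, $M_r(\bC)$, $M_r(\bH)$ for $\pA_1$ catalogued in the preliminaries.
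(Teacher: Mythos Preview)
Your proof is correct and follows essentially the same approach as the paper's: both use the Wedderburn--Malcev splitting to lift elements of $\rho(\pA)$ back into a simple factor of $\pA$, establish a trace-multiplicity identity $\Tr(\,\cdot\,)=m\,\Tr(\rho(\,\cdot\,))$ with $m\ge 1$, and divide through. The paper organizes this via a Jordan--H\"older series of the full $\pA$-module $\bH^n$ and a trace formula summed over all simple factors, whereas you work directly with the single relevant factor $\pA_1$ and its semisimple module decomposition of $\bH^n$---a slightly more economical packaging of the same idea.
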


\begin{proof}
    If $\pR$ is the radical of $\pA$, then
    $\pA/\pR\cong\pA_1\times\dots\times\pA_s$, where each $\pA_i$ is a
    simple algebra. Let $W_k$ be the unique (up to isomorphism)
    irreducible quaternionic module of $\pA_k$. Then $W_1,\dots,W_s$
    are representatives of the isomorphism classes of irreducible
    quaternionic $\pA$-modules and let $\rho_1,\dots,\rho_s$ be
    their corresponding representations. Let
    $0=V_0\subset V_1\subset\dots\subset V_m=\bH^n$ be a
    Jordan--H\"older series of $\bH^n$ (viewed as a quaternionic
    $\pA$-module). Denote by $n_k$ the number of indices
    $i\in\{1,2,\dots,m\}$ such that $V_i/V_{i-1}\cong W_k.$ As $\bH^n$
    is a faithful $\pA$-module, we have $n_k\geq 1$ for each $k$.
    For any $x,y\in\pA$, we have
    \[ \Tr (p(x,y))=\sum_{i=1}^sn_i\Tr (p(\rho_i(x),\rho_i(y))). \]
    By the above remark, for a fixed $k\in\{1,2,\dots,s\}$ and any $x_k,y_k\in\rho_k(\pA)$
    there exist $x,y\in\pA$ such that $\rho_k(x)=x_k$, $\rho_k(y)=y_k$,
    while $\rho_l(x)=\rho_l(y)=0$ for $l\neq k.$ For such $x,y$ we have
    \[ \Tr (p(x,y))=n_k\Tr (p(x_k,y_k)). \]
    As $n_k \geq 1$ and $\Tr (p(x,y)) \leq 0$, we conclude that
    $\Tr (p(x_k,y_k))\leq 0.$ Since the representation $\rho$
    is equivalent to some $\rho_k$, The first assertion is proved.

    The second assertion is a consequence of the first.
\end{proof}

 We shall also need the following easy lemma.

\begin{lemma}\label{rD}
    Let $\pA=M_r(D)$ where $D\in\{\bR,\bC,\bH\}$.
    \begin{enumerate}
        \item If $\Tr([A,B]^2)\leq 0$ holds for all $A,B\in\pA$ then $r=1$.
        \item $\Tr([A,B]^3)=0$ holds true in $\pA$ if and only if either
            $r=1$ or $r=2$ and $D\in\{\bR,\bC\}.$
    \end{enumerate}
\end{lemma}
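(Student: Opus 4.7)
My plan is to handle each of the two parts by separating the ``easy'' sufficient direction from an explicit counterexample in the other direction, with the only technical work being small matrix computations.

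For part (1), the direction $r=1\Rightarrow\Tr([A,B]^2)\le 0$ is immediate: for $D\in\{\bR,\bC\}$ the algebra is commutative, while for $D=\bH$ the commutator $[x,y]=xy-yx$ is a pure quaternion, so $[x,y]^2=-|[x,y]|^2\le 0$. For the converse, if $r\ge 2$ I would exhibit the matrix units $A=E_{12}$, $B=E_{21}\in M_2(\bR)\subseteq M_r(D)$ (padded by zeros when $r>2$); then $[A,B]=E_{11}-E_{22}$, $[A,B]^2=I_2$, and $\Tr([A,B]^2)=4>0$.

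For the ``if'' direction of part (2) I split into three subcases. The case $r=1$, $D\in\{\bR,\bC\}$ is trivial. For $r=1$, $D=\bH$: since $[x,y]$ is pure, $[x,y]^2\in\bR$ and therefore $[x,y]^3$ is again pure, so $\Tr([x,y]^3)=0$. For $r=2$, $D\in\{\bR,\bC\}$: I would combine identity (\ref{jed}) with Corollary \ref{disc2} applied with $m=2$, $n=1$, which forces $\Tr(A^2B^2AB-B^2A^2BA)=0$ on $M_2(\bC)$ and hence on $M_2(\bR)$; equivalently, Cayley--Hamilton shows that a trace-free $2\times 2$ matrix $C=[A,B]$ satisfies $C^2=-\det(C)I$, so $\tr(C^3)=0$.

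For the ``only if'' direction I would construct explicit counterexamples in the two remaining families. For $r\ge 3$ and any $D$, the pair $A=E_{12}+E_{23}+E_{31}$ (cyclic shift) and $B=\diag(1,2,3)$ in $M_3(\bR)\subseteq M_r(D)$ gives $[A,B]^3=-2I_3$, hence $\Tr([A,B]^3)=-12\ne 0$. The genuinely new case is $r=2$, $D=\bH$, where I would propose a sparse pair such as
$$A=\begin{bmatrix}\mathsf{i}&1\\ \mathsf{j}&0\end{bmatrix},\qquad B=\begin{bmatrix}\mathsf{k}&0\\0&0\end{bmatrix},$$
and directly compute $[A,B]$, $[A,B]^2$ and $[A,B]^3$ to obtain $\Tr([A,B]^3)=-12\ne 0$. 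The main obstacle is precisely this last counterexample: since $\Tr([A,B]^3)$ vanishes identically on $M_2(\bC)\subset M_2(\bH)$ and on every triangularizable subalgebra, the pair must use all three imaginary units in a suitably asymmetric way, and many natural symmetric attempts (for instance those with $A^2$ scalar, or with $A$ and $B$ interchanged by conjugation) still collapse to $\Tr([A,B]^3)=0$.
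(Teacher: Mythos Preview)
Your proposal is correct and follows essentially the same strategy as the paper: for the ``if'' direction you invoke the trace identities (the paper cites Corollaries \ref{qident} and \ref{disc2}; you cite (\ref{jed}) with Corollary \ref{disc2} and add a self-contained Cayley--Hamilton alternative), and for the ``only if'' direction you exhibit explicit counterexample pairs in $M_2(\bH)$ and $M_3(\bR)$, just as the paper does---only the specific matrices differ. Your choices work out (in particular your $M_2(\bH)$ pair gives $\Tr([A,B]^3)=-12$ and your cyclic-shift pair in $M_3(\bR)$ gives $[A,B]^3=-2I_3$, hence $\Tr=-12$), so there is no gap.
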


\begin{proof}
To prove (1), suppose $r=2$ then the matrix pair
$$
    A=\begin{bmatrix}1&0\\0&0\end{bmatrix},\quad
    B=\begin{bmatrix}0&1\\-1&0\end{bmatrix}
$$
has $\Tr([A,B]^2)=4 > 0$. If $r\geq 3$, we may extend the matrices
$A,B$ with rows and columns of zeros to see that inequality does not
hold. Hence, we must have that $r=1$.

Next we prove (2). If $r=1$ or $r=2$ and $D\in\{\bR,\bC\}$ then
Corollaries \ref{qident} and \ref{disc2} give $\Tr ([A,B]^3)=0$ on
$\pA$.

To prove the converse, we proceed by contradiction. If $r=2$ and
$D=\bH$ then
\[ A=\begin{bmatrix}0&1\\0&\mathsf{j}\end{bmatrix}, \quad
    B=\begin{bmatrix}0&1\\ \mathsf{i}&0\end{bmatrix}\]
satisfy $\Tr([A,B]^3)=-4\neq 0$. Similarly, for $r\geq 3$ we see our
equality is invalid even for $D=\bR:$ Observe that
\[ A=\begin{bmatrix}0&0&0\\0&1&0\\1&0&0\end{bmatrix}, \quad
    B=\begin{bmatrix}1&1&0\\0&0&1\\0&0&0\end{bmatrix}\]
have $\Tr([A,B]^3)=-6\neq0$. Thus, by our argument in part (1) we are
done.
\end{proof}

The following theorem is a quaternionic version of Theorem 1.5.4
from \cite{RR}. The only change in the wording appears in part (4)
where we have replaced their equality with an inequality.

\begin{theorem}\label{nilpotent}
    For a unital subalgebra $\pA\subseteq M_n$, the following are
    equivalent:
    \begin{enumerate}
    \item $\pA$ is triangularizable.
    \item If $A,B\in\pA$ are nilpotent then so is $A+B$.
    \item If at least one of $A,B\in\pA$ is nilpotent then so is $AB$.
    \item $\Tr ([A,B]^2) \leq 0$ for all $A,B\in\pA$.
    \end{enumerate}
\end{theorem}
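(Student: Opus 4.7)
The plan is to prove (1) $\Rightarrow$ each of (2), (3), (4) by a direct computation in triangular form, and to prove each converse by showing that the hypothesis forces every simple quotient of the semisimple algebra $\pA/\pR$ to be isomorphic (as an $\bR$-algebra) to $\bR$, $\bC$, or $\bH$; from this reduction, triangularizability follows via a Jordan--H\"older argument on the faithful quaternionic $\pA$-module $\bH^n$.

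For (1) $\Rightarrow$ (2), (3), (4), conjugate $\pA$ into $\pU_n$. Because $\bH$ is a division ring, a nilpotent upper triangular quaternionic matrix has zero diagonal (from $a_{ii}^k=0$), hence is strictly upper triangular; properties (2) and (3) follow immediately. For (4), a short calculation shows that for $A,B\in\pU_n$ the commutator $[A,B]$ is upper triangular with $i$-th diagonal entry $d_i=a_{ii}b_{ii}-b_{ii}a_{ii}$, which is a pure quaternion since $\Re(xy)=\Re(yx)$. The diagonal of $[A,B]^2$ therefore consists of the squares $d_i^2=-|d_i|^2\leq 0$, giving $\Tr([A,B]^2)=2\sum_i d_i^2\leq 0$.

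For the converses, invoke the Wedderburn decomposition $\pA/\pR\cong\pA_1\times\cdots\times\pA_s$ with $\pA_i\cong M_{r_i}(D_i)$ and $D_i\in\{\bR,\bC,\bH\}$. The key claim is that each of (2), (3), (4) forces $r_i=1$ for every $i$. For (2) and (3), use the Wedderburn--Malcev theorem cited in the remark to lift $\pA/\pR$ to a subalgebra $\mathcal{B}\subseteq\pA$; if some $r_i\geq 2$, then the elements of $\mathcal{B}$ corresponding to $E_{12},E_{21}\in\pA_i$ are nilpotent in $\pA$ (since an element of $\mathcal{B}\cong\prod\pA_j$ is nilpotent iff each of its components is), yet their sum has square $E_{11}+E_{22}$, contradicting (2), and their product is $E_{11}$, contradicting (3). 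For (4), apply Proposition \ref{subalgebra} with $p(x,y)=[x,y]^2$ and the irreducible quaternionic representation $\rho$ attached to $\pA_i$, transferring the inequality to $\rho(\pA)\cong M_{r_i}(D_i)$; Lemma \ref{rD}(1) then forces $r_i=1$ directly.

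Once $r_i=1$ for every $i$, each irreducible quaternionic $\pA$-module has quaternionic dimension one, so any Jordan--H\"older series $0=V_0\subset V_1\subset\cdots\subset V_m=\bH^n$ of the faithful quaternionic $\pA$-module $\bH^n$ has $m=n$ with each successive quotient one-dimensional over $\bH$; a basis adapted to this complete flag yields $P\in\GL_n(\bH)$ with $P\pA P^{-1}\subseteq\pU_n$. The main technical obstacle is keeping the algebra- and module-side bookkeeping aligned: one must distinguish $\pA$, $\pA/\pR$, the Wedderburn--Malcev lift $\mathcal{B}$, the simple factors $\pA_i=M_{r_i}(D_i)$, and their irreducible quaternionic modules, verifying in particular that a Wedderburn--Malcev lift of a nilpotent element really is nilpotent as a matrix in $M_n$. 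With the framework of Proposition \ref{subalgebra}, Lemma \ref{rD}, and the Wedderburn--Malcev remark already in place, however, each such verification is routine.
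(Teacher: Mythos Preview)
Your proof is correct and follows essentially the same approach as the paper's: both establish (1)$\Rightarrow$(2),(3),(4) directly in triangular form, and for the converse both reduce via Jordan--H\"older/Wedderburn structure theory to showing each simple factor has rank~1, using the $E_{12},E_{21}$ pair for (2),(3) and Lemma~\ref{rD} for (4). The only organizational difference is that the paper first passes to block-triangular form and works with the diagonal blocks $\rho_k(\pA)$ directly, whereas you analyze the abstract simple factors first (invoking Proposition~\ref{subalgebra} and the Wedderburn--Malcev lift explicitly) and triangularize at the end.
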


\begin{proof}
First, if we assume (1) holds then it is trivial to see that
(2),(3), and (4) are all satisfied.

Conversely, suppose at least one of (2),(3) or (4) holds. As in the
proof of Proposition \ref{subalgebra}, choose a Jordan--H\"older
series $0=V_1\subset V_2\subset\cdots\subset V_m=\bH^n$ for the
quaternionic $\pA$-module $\bH^n$. Let $n_k$ be the quaternionic
dimension of the quotient $W_k=V_k/V_{k-1}.$ It suffices to show
that each $n_k=1.$

There is a $Q\in \GL_n(\bH)$ such that $Q\pA Q^{-1}$ consists of
block upper triangular matrices with successive diagonal blocks
square of size $n_k$, $k=1,\dots,m.$ For any $X\in\pA$ let
$\rho_k(X)$ denote the $k$-th diagonal block of size $n_k$ of the
matrix $QXQ^{-1}.$ Each $\rho_k(\pA)$ is a simple real algebra
isomorphic to $M_{n_k}(D_k)$ with $D_k\in\{\bR,\bC,\bH\}.$ By
Noether--Skolem Theorem we may assume that $Q$ is chosen so that
each $\rho_k(\pA)=M_{n_k}(D_k).$

If (2) or (3) holds then $n_k=1$ because otherwise the pair of
nilpotent matrices
$\left[\begin{smallmatrix}0&1\\0&0\end{smallmatrix}\right],
\left[\begin{smallmatrix}0&0\\1&0\end{smallmatrix}\right]$ will add
and multiply to a matrix which is not nilpotent. If (4) holds, then
Lemma \ref{rD} gives that $n_k=1$.
\end{proof}

\begin{remark}
    The equivalence of (1),(2) and (3) was also mentioned by
    Kermani at the recent ILAS Conference \cite{K}.
\end{remark}

Our next objective is to characterize subalgebras of $M_n$ that
satisfy the identity $\Tr ([X,Y]^3)=0$. For that purpose we need the
concept of quasi-triangularizability which we now define

Let us define a $\{1,2\}$\emph{-sequence} as a sequence
$\sigma=(\sigma_1,\sigma_2,\dots,\sigma_m)$ of integers from
\{1,2\}. We say that $m$ is its \emph{length} and
$|\sigma|:=\sigma_1+\dots+\sigma_m$ is its \emph{size}. Assuming
that $|\sigma|=n,$ we denote by $M_\sigma$ the subalgebra of $M_n$
consisting of all block triangular matrices
$$
A=\begin{bmatrix}A_{11} & A_{12} & \cdots & A_{1,m-1} & A_{1m}\\
                 0      & A_{22} &        & A_{2,m-1} & A_{2m}\\
                 \vdots &        &        &           &       \\
                 0      & 0      &        & 0         & A_{mm}
\end{bmatrix}
$$
with the diagonal blocks $A_{ii}\in M_{\sigma_i}$ subject to the
additional condition that $A_{ii}\in M_2(\bC)$ whenever
$\sigma_i=2$, and all $A_{i,j},i<j$, arbitrary quaternionic matrices
of appropriate sizes.

We can now define the quasi-triangularizable sets of quaternionic
matrices.

\begin{definition}\label{quasitri}
    A collection $\pS$ of $n\times n$ quaternionic matrices is
    \emph{quasi-triangularizable} (denoted by q.t.) if
    $P\pS P^{-1}\subseteq M_{\sigma}$ for some $P\in\GL_n(\bH)$
    and some $\{1,2\}$-sequence $\sigma$ of size $n$. (If all
    $\sigma_i$ can be taken to be $1$, then $\pS$ is triangularizable.)
\end{definition}

\begin{theorem}\label{qt}
A unital subalgebra $\pA\subseteq M_n$ is q.t. if and only if
$\Tr([A,B]^3)=0$ for all $A,B\in \pA$.
\end{theorem}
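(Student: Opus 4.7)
The plan is to adapt the strategy of Theorem \ref{nilpotent}, using Lemma \ref{rD}(2) in place of Lemma \ref{rD}(1). The forward direction is the straightforward one. Assuming $P\pA P^{-1}\subseteq M_\sigma$ for some $P\in\GL_n(\bH)$ and some $\{1,2\}$-sequence $\sigma$, the identity $\Tr([X,Y]^3)=0$ is invariant under simultaneous conjugation (property (5)), so I may replace $\pA$ by $P\pA P^{-1}$. For any $A,B\in M_\sigma$, both $[A,B]$ and $[A,B]^3$ are block upper triangular, with diagonal blocks $[A_{ii},B_{ii}]$ and $[A_{ii},B_{ii}]^3$ respectively. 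Hence
\begin{equation*}
\Tr([A,B]^3)=\sum_{i=1}^m\Tr([A_{ii},B_{ii}]^3).
\end{equation*}
When $\sigma_i=1$, the block lies in $\bH$, so equation (\ref{jed}) combined with Corollary \ref{qident} (case $m=2,n=1$) gives $\Tr([A_{ii},B_{ii}]^3)=0$. When $\sigma_i=2$, the block lies in $M_2(\bC)$, and the same computation together with Corollary \ref{disc2} (and the identity $\Tr=2\Re\tr$ on complex matrices) gives the same conclusion.

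For the converse, I follow the template of Theorem \ref{nilpotent}. Choose a Jordan--H\"older series $0=V_0\subset V_1\subset\cdots\subset V_m=\bH^n$ of $\bH^n$ as a quaternionic $\pA$-module, and apply the Noether--Skolem Theorem to find $Q\in\GL_n(\bH)$ such that $Q\pA Q^{-1}$ is block upper triangular with diagonal blocks $\rho_k(\pA)=M_{n_k}(D_k)$ for some $D_k\in\{\bR,\bC,\bH\}$. The hypothesis $\Tr([X,Y]^3)=0$ holds on $\pA$; applying Proposition \ref{subalgebra} (in its equality form) to the polynomial $p(X,Y)=[X,Y]^3$ transfers this identity to each irreducible summand $\rho_k(\pA)$. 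By Lemma \ref{rD}(2), it then follows that for every $k$ either $n_k=1$, or $n_k=2$ with $D_k\in\{\bR,\bC\}$. Since $M_2(\bR)\subseteq M_2(\bC)$, in the latter case the block fits inside $M_2(\bC)$. Setting $\sigma_k$ to be the corresponding block size yields a $\{1,2\}$-sequence $\sigma$ of size $n$ with $Q\pA Q^{-1}\subseteq M_\sigma$, so $\pA$ is quasi-triangularizable.

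No serious obstacle is anticipated: the heavy machinery (Jordan--H\"older reduction, Noether--Skolem normalization, the transfer Proposition \ref{subalgebra}, and the classification Lemma \ref{rD}(2)) is already in place from the proof of Theorem \ref{nilpotent} and from Section 4's trace identities. The one point requiring care is that Lemma \ref{rD}(2) permits diagonal blocks of the form $M_2(\bR)$, which are not literal $M_2(\bC)$-blocks but fit inside them; this is precisely why Definition \ref{quasitri} is framed with $M_2(\bC)$ rather than more restrictively.
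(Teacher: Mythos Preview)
Your proof is correct and follows essentially the same route as the paper's: both directions use the block decomposition of $\Tr([A,B]^3)$ together with Lemma \ref{rD}(2), and the converse reduces to the Jordan--H\"older/Noether--Skolem normalization exactly as in Theorem \ref{nilpotent}, with Proposition \ref{subalgebra} transferring the trace identity to each irreducible diagonal block. The paper organizes the converse slightly differently (first analyzing the simple factors of $\pA/\pR$, then passing to the composition series, then applying Noether--Skolem block by block), but the substance is identical.
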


\begin{proof}
If $\pA\subseteq M_n$ is q.t., then for $A,B\in\pA$ there is a
$P\in\GL_n(\bH)$ and a \{1,2\}-sequence $\sigma$ with $|\sigma|=n$
such that $P\pA P^{-1}\subseteq M_{\sigma}$. For $A,B\in \pA$ with
diagonal blocks, denoted $A_1,\dots,A_k$ and $B_1,\dots,B_k$ for
$PAP^{-1},PBP^{-1}$ respectively, we see that all satisfy the trace
identity proven in Corollary \ref{disc2}. In particular the identity
holds for integers $(m,n)=(2,1)$. By Lemma \ref{rD} and the identity
(\ref{jed}), we have
\[ \Tr([A,B]^3)=\sum_{i=1}^k\Tr([A_i,B_i]^3)=0. \]

Conversely, suppose that any $A,B\in\pA$ satisfy the given identity.
If $\pR$ is the radical of $\pA$ then we know that
$\pA/\pR=\pA_1\times\dots\times\pA_s$ where the $\pA_i$'s are simple
$\bR$-algebras. That is, for each $i\in\{1,\dots,s\}$ we have that
$\pA_i\cong M_r(D_i)$, for some positive integer $r$ and
$D_i\in\{\bR,\bC,\bH\}.$ Proposition \ref{subalgebra} guarantees
that our trace identity remains true on each $\pA_i$. Thus, Lemma
\ref{rD} implies that the possibilities for $r_i$ and $D_i$ can be
reduced to exactly one of $r_i=1$ and $D_i\in\{\bR,\bC,\bH\}$ or
$r_i=2$ and $D_i\in\{\bR,\bC\}$.

Next, as in the proof of Proposition \ref{subalgebra}, we choose a
Jordan--H\"older series $0=V_0\subset V_1\subset \ldots\subset
V_m=\bH^n$ for the quaternionic $\pA$-module $\bH^n$.
{}From the fact just proved above it follows that each irreducible
quotient $V_i/V_{i-1}$ has quaternionic dimension $\sigma_i=1$ or 2.
Consequently, there exists $Q\in\GL_n(\bH)$ such that $Q\pA Q^{-1}$
is contained in the subalgebra of $M_n$ consisting of the block
upper triangular matrices whose successive diagonal blocks have
sizes given by the \{1,2\}-sequence
$\sigma=(\sigma_1,\ldots,\sigma_m)$ of size $n$. For any $X\in\pA$
let $\rho_i(X)$ denote the $i$-th diagonal block of the matrix
$QXQ^{-1}$.

Assume that $\sigma_i=2$. Then $\rho_i(\pA)$ is a unital subalgebra
of $M_2$ isomorphic to $M_2(\bR)$ or $M_2(\bC)$. By Noether--Skolem
theorem see \cite[p.230]{RP}, there exists a matrix
$P_i\in\GL_2(\bH)$ such that the subalgebra $P_i\rho_i(\pA)P_i^{-1}$
is exactly equal to $M_2(\bR)$ or $M_2(\bC)$, respectively. If
$\sigma_i=1$ we just set $P_i=[1]$. Let $P\in\GL_n(\bH)$ be the
block diagonal matrix with successive diagonal blocks
$P_1,\ldots,P_m$. Then we have $PQ\pA Q^{-1}P^{-1}\subseteq
M_\sigma$. Therefore, $\pA$ is quasi-triangularizable.
\end{proof}

The referee remarks that the theory of polynomial identities 
(see e.g. \cite{DF}) is relevant for the last theorem.
The algebras $\bH$, $M_2(\bR)$
and $M_2(\bC)$ all satisfy the same polynomial identities over $\bR$
because they are all central simple of dimension 4 over their
respective centers, which need not be $\bR$. Thus a unital
subalgebra  $\pA$ is quasi-triangularizable if and only if
$\pA/\pR$ satisfies all the polynomial identities of $M_2(\bR)$.

\section{Simultaneously triangularizable matrix pairs}
The pairs of matrices over a field that are Simultaneously
triangularizable have been studied for a long time, see e.g the book
\cite{RR} and its references. Most of the known results deal with
the problem of characterizing such matrix pairs. On the other hand
the set of all such matrix pairs does not have a simple description,
apart from a particular case which will be mentioned in the next
section. In this section we make several observations concerning
this problem for quaternionic matrices.

Let us start with the definition.

\begin{definition}
    We denote by $\pW_n$, the set of all matrix pairs $(A,B)\in
    M_n\times M_n$ such that $A$ and $B$ are
    simultaneously triangularizable.
\end{definition}

The problem of describing $\pW_n$ is apparently very hard (see next
section for the case $n=2$). An easy observation is that this set is
semi-algebraic. Indeed, the group $G_n:=\GL_n(\bH)$ is a real
algebraic group and the map
\begin{equation} \label{map}
     G_n\times \pU_n\times \pU_n\to
        M_n\times M_n
\end{equation}
which sends $(g,x,y)$ to $(gxg^{-1},gyg^{-1})$ is regular. Now
observe that $\pW_n$ is the set theoretic image of this map, and it
is well known that the image of a regular map is a semi-algebraic
set.

We proceed to show that $\pW_n$ is a closed set. We claim that the
image of $\Sp(n)\times \pU_n\times\pU_n$ under the map (\ref{map})
is the whole set $\pW_n.$ Indeed, let $(x,y)\in\pW_n$ and choose
$g\in G_n$ and $a,b\in\pU_n$ such that $x=gag^{-1}$ and
$y=gbg^{-1}.$ Let us write $g=ut$ where $u\in\Sp(n)$ and $t\in
G_n\cap\pU_n.$ Then we have $x=ucu^{-1},\,y=udu^{-1}$ with
$c=tat^{-1}$ and $d=tbt^{-1}$ in $\pU_n.$ This proves our claim.

Since $\Sp(n)$ is a compact group and $\pU_n\times\pU_n$ is a closed
set, we infer that $\pW_n$ is closed in the ordinary (Euclidean)
topology. Apparently, this is not true for the Zariski topology (see
Problem 8.3).

Let $\pP_n$ denote the algebra of real polynomial functions on
$M_n\times M_n$. Denote by $\pP_n'$ the subalgebra of
$G_n$-invariant functions, i.e., functions $f\in\pP_n$ such that
\[ f(gxg^{-1},gyg^{-1})=f(x,y) ; \quad
\forall g\in G_n; \forall x,y\in M_n.\] Similarly, let  $\pP_n''$
denote the subalgebra of $\pP_n''$ consisting of $\Sp(n)$-invariant
functions,

Since $\pW_n$ is $G_n$-invariant, its Zariski closure
$\overline{\pW}_n$ is also $G_n$-invariant. Let
$\pI_n\subseteq\pP_n$ be the ideal consisting of all functions $f\in
\pP_n$ that vanish on $\pW_n$, and set $\pI_n'=\pI_n\cap\pP_n'$ and
$\pI_n''=\pI_n\cap\pP_n''$. By the definition of $\overline{\pW}_n$
we have
\[ \overline{\pW}_n=\{(x,y)\in M_n\times M_n: f(x,y)=0, \forall f\in \pI_n\}.\]
By using the fact that $\Sp(n)$ is a compact group, one can easily
show that also
\[ \overline{\pW}_n=\{(x,y)\in M_n\times M_n: f(x,y)=0,
\forall f\in \pI_n'\}.\]

The algebra $\pP_n$ is bigraded: We assign to the $4n^2$ coordinate
functions of the matrix $x$ the bidegree (1,0), and to the
coordinate functions of $y$ the bidegree (0,1). The subalgebras
$\pP_n'$ and $\pP_n''$ inherit the bigradation from $\pP_n.$ The
ideals $\pI_n$, $\pI_n'$ and $\pI_n''$ are also bigraded.

We shall now exhibit two infinite families of concrete polynomials
that belong to $\pI_n'$. Let us first state an obvious result about
matrices with purely imaginary eigenvalues.

\begin{lemma}\label{imaginary}
    If all eigenvalues of $A\in M_n$ are purely imaginary, then
    $\Tr (A^{2k-1})=0,$ $\Tr (A^{4k-2})\leq 0$ and $\Tr (A^{4k}) \geq 0$ for
    all integers $k\geq 1$.
\end{lemma}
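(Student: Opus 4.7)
The plan is to reduce to the triangular case and then compute directly. By hypothesis each eigenvalue of $A$ has the form $\la_j=\mathsf{i}b_j$ with $b_j\ge 0$, and by the definition of eigenvalues recalled in Section~2 there exists $P\in\GL_n(\bH)$ with $PAP^{-1}\in\pU_n$ and diagonal entries $\la_1,\dots,\la_n$. Since $\Tr$ is a $\GL_n(\bH)$-conjugation invariant (property (5) in Section~2), I may as well assume $A$ itself is upper triangular with these diagonal entries.

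Next I would pass to the complex embedding. Applying $\chi_n$ to an upper triangular quaternionic matrix produces a block triangular complex matrix whose overall set of diagonal entries is $\la_1,\dots,\la_n,\conj{\la}_1,\dots,\conj{\la}_n$ (this is also how the paper recalls the eigenvalues of $\chi_n(A)$). Since $\chi_n$ is a ring homomorphism, $\chi_n(A^m)=\chi_n(A)^m$ is again upper triangular with diagonal entries $\la_j^m$ and $\conj{\la}_j^m$. Combining this with property (1) gives the formula
\[
\Tr(A^m)=\tr\chi_n(A^m)=\sum_{j=1}^n\left(\la_j^m+\conj{\la}_j^m\right)=2\sum_{j=1}^n\Re(\la_j^m).
\]

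With this in hand the three claims reduce to trivial parity computations. For $\la_j=\mathsf{i}b_j$ one has $\la_j^m=\mathsf{i}^m b_j^m$, so
\[
\Re(\la_j^{2k-1})=0,\qquad \Re(\la_j^{4k-2})=-b_j^{4k-2},\qquad \Re(\la_j^{4k})=b_j^{4k}.
\]
Substituting back yields $\Tr(A^{2k-1})=0$, $\Tr(A^{4k-2})=-2\sum_j b_j^{4k-2}\le 0$, and $\Tr(A^{4k})=2\sum_j b_j^{4k}\ge 0$, as required.

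There is essentially no obstacle here; the only thing to be careful about is to invoke the correct notion of eigenvalue (the one fixed in Section~2, with non-negative imaginary part), so that writing $\la_j=\mathsf{i}b_j$ with $b_j\ge 0$ is legitimate, and to remember that $\Tr$ is twice the real part of the sum of the diagonal entries rather than the usual trace. Once those conventions are in place, triangularization plus the identity $\mathsf{i}^{2k-1}\in\{\pm\mathsf{i}\}$, $\mathsf{i}^{4k-2}=-1$, $\mathsf{i}^{4k}=1$ does the work.
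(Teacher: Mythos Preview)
Your argument is correct and is exactly the ``obvious'' computation the paper alludes to; the paper in fact states this lemma without proof. One small simplification: the detour through $\chi_n$ is unnecessary, since once $A\in\pU_n$ with (complex) diagonal $\la_1,\dots,\la_n$, the diagonal of $A^m$ is $\la_1^m,\dots,\la_n^m$ and the definition of $\Tr$ gives $\Tr(A^m)=2\sum_j\Re(\la_j^m)$ directly. If you do keep the $\chi_n$ route, note that $\chi_n(A)$ is not block upper triangular in the $2\times 2$ block sense you suggest (the $(2,1)$ block is $A_2$, which is only strictly upper triangular); what makes the diagonal read-off work is that $A^m$ still has complex diagonal, so the $\mathsf{j}$-part of $A^m$ has zero diagonal and hence the diagonal of $\chi_n(A^m)$ is exactly $\la_j^m,\conj{\la}_j^m$.
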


It is clear that, for $(X,Y)\in\pW_n$, all eigenvalues of $[X,Y]$
are purely imaginary. Hence, we obtain as a simple corollary from
above our first family of polynomial equations (and inequalities)
that are satisfied on $\pW_n$.

\begin{cor}\label{imaginary2}
    If $\,(X,Y)\in \pW_n$ then
    \[\Tr ([X,Y]^{2k-1})=0,\quad \Tr ([X,Y]^{4k-2})\leq 0,\quad \Tr ([X,Y]^{4k}) \geq
    0\]
    are valid for all integers $k\geq 1$.
\end{cor}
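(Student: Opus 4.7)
The plan is to apply Lemma \ref{imaginary} to $A := [X,Y]$; the content of the corollary reduces to showing that $[X,Y]$ has only purely imaginary eigenvalues whenever $(X,Y) \in \pW_n$.

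First I would invoke the $\GL_n(\bH)$-conjugation invariance of $\Tr$ (property (5) of Section 2), which gives
\[ \Tr([X,Y]^k) = \Tr\left([PXP^{-1},PYP^{-1}]^k\right) \]
for every $P \in \GL_n(\bH)$ and every $k \ge 1$. Since $(X,Y) \in \pW_n$, I may choose $P$ so that both $A := PXP^{-1}$ and $B := PYP^{-1}$ lie in $\pU_n$; it thus suffices to check the three assertions for $[A,B]$.

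The key observation is then that $\pU_n$ is closed under commutation, so $[A,B] \in \pU_n$, and the successive diagonal entries of $[A,B]$ are the quaternionic commutators $a_ib_i - b_ia_i$ of the corresponding diagonals $a_i,b_i$ of $A,B$. Because $\Re(pq) = \Re(qp)$ for all $p,q \in \bH$ (recorded just after Definition \ref{ptrace}'s analogue in Section 2), each $a_ib_i - b_ia_i$ is a pure quaternion. By the paper's convention, the eigenvalues of an upper triangular matrix in $M_n$ are the unique complex numbers with nonnegative imaginary part in the $\bH$-conjugacy classes of its successive diagonal entries; and any pure quaternion $q$ is $\bH$-conjugate to $|q|\mathsf{i}$, which is purely imaginary. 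Hence every eigenvalue of $[A,B]$ is purely imaginary, and Lemma \ref{imaginary} delivers the three stated conclusions. There is no real obstacle here — once the paper's convention on eigenvalues is recalled, the argument is immediate.
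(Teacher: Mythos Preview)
Your proof is correct and follows exactly the paper's approach: the paper simply asserts (just before the corollary) that ``it is clear that, for $(X,Y)\in\pW_n$, all eigenvalues of $[X,Y]$ are purely imaginary'' and then invokes Lemma~\ref{imaginary}, and you have merely spelled out the details behind that claim via simultaneous triangularization and the fact that the diagonal commutators $a_ib_i-b_ia_i$ are pure quaternions.
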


We can use the results of section 4 to obtain our second family.

\begin{cor}\label{eq3}
    If $(X,Y)\in\pW_n$ then
    \[\Tr(X^kY^kX^mY^m-Y^kX^kY^mX^m)=0\]
    for all integers $k,m\geq1.$
\end{cor}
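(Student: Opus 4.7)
The plan is to reduce the corollary to the scalar quaternionic identity of Corollary \ref{qident} applied on the diagonal of a simultaneously upper-triangular form.

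First I would exploit the definition of $\pW_n$: by hypothesis there exists $P\in\GL_n(\bH)$ such that $X':=PXP^{-1}$ and $Y':=PYP^{-1}$ both lie in $\pU_n$. By property (5) of the quaternionic trace listed in the Preliminaries, conjugation by $P$ preserves $\Tr(w(X,Y))$ for any word $w$, so it suffices to prove the identity under the assumption that $X$ and $Y$ are themselves upper triangular.

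Next I would reduce to a diagonal computation. If $X=[x_{ij}]$ and $Y=[y_{ij}]$ are upper triangular with diagonal entries $x_1,\dots,x_n$ and $y_1,\dots,y_n$ respectively, then the product of any word in $X, X^*$-free symbols $X^a Y^b \cdots$ is again upper triangular, and its $(i,i)$ entry is obtained by multiplying the corresponding $(i,i)$ entries factor by factor. In particular, the $(i,i)$ diagonal entry of $X^k Y^k X^m Y^m$ is $x_i^k y_i^k x_i^m y_i^m$, and similarly that of $Y^k X^k Y^m X^m$ is $y_i^k x_i^k y_i^m x_i^m$. Consequently
\[
\Tr(X^k Y^k X^m Y^m) = \sum_{i=1}^n 2\Re\!\left(x_i^k y_i^k x_i^m y_i^m\right),
\]
and likewise for the second term.

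Finally, the hard part, which is actually not hard at all given what is already proved: apply Corollary \ref{qident} with $x=x_i$, $y=y_i$ and exponents $(m,n)$ replaced by $(k,m)$. That corollary asserts exactly $\Tr(x_i^k y_i^k x_i^m y_i^m) = \Tr(y_i^k x_i^k y_i^m x_i^m)$ for quaternions, i.e.\ $\Re(x_i^k y_i^k x_i^m y_i^m)=\Re(y_i^k x_i^k y_i^m x_i^m)$, for every $i$. Summing over $i$ gives $\Tr(X^k Y^k X^m Y^m)=\Tr(Y^k X^k Y^m X^m)$, which is the required identity. The only step deserving any real thought is verifying the reduction to the diagonal for general $k,m$, but this is just the standard fact that upper triangular matrices form a subalgebra on which taking diagonal entries is an algebra homomorphism to $\bH^n$.
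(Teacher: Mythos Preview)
Your proof is correct and is exactly the argument the paper intends: it states only that the identity follows from ``the results of section 4'' (namely Corollary~\ref{qident}), and your reduction via simultaneous triangularization plus the diagonal algebra homomorphism $\pU_n\to\bH^n$ is precisely how that reduction goes.
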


To get more insight into the structure of the set $\pW_n$, we shall
analyze the generic fibres of the first projection map
$\pi_1:\pW_n\rightarrow M_n$. As any matrix $A\in M_n$ is
triangularizable, $\pi_1$ is surjective. We denote by $\pF_A$ the
fibre of $\pi_1$ over $A$, i.e.,
$$
    \pF_A=\pi_1^{-1}(A)=\{(A,B): (A,B)\in \pW_n\}.
$$

We say that a matrix $A\in M_n$ is \emph{generic} if it has $n$
distinct eigenvalues. The set of all generic matrices is an open
dense subset of $M_n$. We shall now describe the generic fibres of
$\pi_1$, i.e., the fibres $\pF_A$ with $A$ generic.

For convenience, let us identify the symmetric group $S_n$ with the
group of $n\times n$ permutation matrices.

\begin{proposition}\label{nfactorial}
    For generic $A\in M_n$, the fibre $\pF_A$ is the union of
    $n!$ real vector spaces, each of dimension $2n(n+1)$. Any two of
    these spaces intersect in a common vector subspace of dimension
    $\geq 4n$.
\end{proposition}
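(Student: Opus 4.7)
First, I would reduce to the case where $A$ is diagonal. Since $A$ is generic, its eigenvalues $\lambda_1,\ldots,\lambda_n$ are pairwise distinct and lie in the closed upper half-plane. By the generalized Schur theorem, some $\GL_n(\bH)$-conjugate of $A$ is upper triangular with diagonal $(\lambda_1,\ldots,\lambda_n)$. For $i\neq j$, the $\bR$-linear operator $m\mapsto \lambda_i m-m\lambda_j$ on $\bH$ is a bijection: writing $m=p+\mathsf{j}q$ with $p,q\in\bC$ and using $\lambda\mathsf{j}=\mathsf{j}\bar{\lambda}$ for $\lambda\in\bC$, the equation $\lambda_i m=m\lambda_j$ decouples into $(\lambda_i-\lambda_j)p=0$ and $(\bar{\lambda}_i-\lambda_j)q=0$, both of which force $p=q=0$ under our distinctness hypothesis. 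Solving the off-diagonal entries from the superdiagonal upward then yields an upper triangular $Q\in\GL_n(\bH)$ conjugating the Schur form of $A$ into $D:=\diag(\lambda_1,\ldots,\lambda_n)$. Since $\pF_A$ is carried $\bR$-linearly onto $\pF_D$ by this conjugation, I may assume $A=D$.

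Next, I would classify the $D$-invariant right $\bH$-submodules of $\bH^n$. Because distinct eigenvalues in the closed upper half-plane determine distinct $\bH$-conjugacy classes (the $2$-spheres $\{q\in\bH:\Re q=\Re\lambda_i,\,|q|=|\lambda_i|\}$), any right eigenvector of $D$ must be supported in a single coordinate, so the simple $D$-invariant right submodules of $\bH^n$ are exactly the eigenlines $e_i\bH$. These are pairwise nonisomorphic as $D$-modules, so by uniqueness of isotypic decomposition every $D$-invariant right submodule of $\bH^n$ is a direct sum of some of the $e_i\bH$. Consequently the complete $D$-invariant flags in $\bH^n$ are in bijection with $S_n$, the permutation $\sigma$ corresponding to the flag $F_\sigma$ whose $k$-th term is $e_{\sigma(1)}\bH\oplus\cdots\oplus e_{\sigma(k)}\bH$. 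Since $(A,B)\in\pW_n$ amounts to $A$ and $B$ sharing a common $\bH$-invariant complete flag,
\[
    \pF_A=\bigcup_{\sigma\in S_n}V_\sigma,\qquad V_\sigma:=\{B\in M_n: BF_\sigma\subseteq F_\sigma\}.
\]

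Finally, I would count dimensions. In the ordered basis $(e_{\sigma(1)},\ldots,e_{\sigma(n)})$, the condition $BF_\sigma\subseteq F_\sigma$ is exactly upper triangularity, so $V_\sigma$ is $\bR$-linearly isomorphic to $\pU_n$ and has real dimension $4\binom{n+1}{2}=2n(n+1)$. Every quaternionic diagonal matrix preserves each eigenline $e_i\bH$ and therefore lies in every $V_\sigma$; the diagonal subspace has real dimension $4n$, so $V_\sigma\cap V_\tau$ has real dimension at least $4n$ for every pair. The only substantive step is the invariant-subspace classification in the second paragraph, which rests on the observation that generic eigenvalues sit in distinct $\bH$-conjugacy classes; once that is in place, the reduction and the dimension counts are routine.
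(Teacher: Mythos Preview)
Your argument is correct and follows essentially the same outline as the paper: reduce to $A$ diagonal, identify the $n!$ pieces with the permutation-conjugates $P\pU_nP^{-1}$ of the upper triangulars, and note that the diagonal subalgebra sits in every piece. The one stylistic difference is in the ``other inclusion'': the paper argues by explicit matrix factorization (given a simultaneous triangularizer $Q$, it further diagonalizes $QAQ^{-1}$ by an upper triangular $R$, gets a permutation $P$, and observes that $S=PRQ$ centralizes $A$ and hence is diagonal), whereas you instead classify all $D$-invariant complete flags via the isotypic decomposition and read off that they are exactly the $n!$ coordinate flags. Both routes rest on the same underlying fact---that the map $m\mapsto\lambda_i m-m\lambda_j$ on $\bH$ is bijective for $i\neq j$, equivalently that the centralizer of a generic diagonal matrix is the diagonal torus---and you make this fact explicit where the paper uses it tacitly.
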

\begin{proof}
    Let $\la_1,\dots,\la_n$ be the distinct eigenvalues of
    $A$. If $P\in G_n$ then
    \[ \pF_{PAP^{-1}}=\pi_1^{-1}(PAP^{-1})=P\pi_1^{-1}(A)P^{-1}=P\pF_AP^{-1}. \]
    Hence, without any loss of generality, we may assume that $A$ is
    a diagonal matrix $A=\diag(\la_1,\dots,\la_n)$. Then it
    suffices to prove that
    \[ \pF_A=\bigcup_{P\in S_n}\pF_{A,P}\]
    where
    \[ \pF_{A,P}=\{A\}\times P\pU_nP^{-1}.\]

    Let $P\in S_n$. Since $P^{-1}\pF_{A,P}P=\{P^{-1}AP\}\times
    \pU_n\subseteq \pU_n\times \pU_n$, we have $\pF_{A,P}\subseteq
    \pW_n$. It follows that $\pF_{A,P}\subseteq \pF_A$ for all $P\in
    S_n$.

    Conversely, let $(A,B)\in \pF_A$. Choose $Q\in G_n$ such
    that
    \[(QAQ^{-1},QBQ^{-1}) \in \pU_n\times \pU_n.\]
    Since $QAQ^{-1}$ has $n$ distinct eigenvalues
    $\la_1,\dots,\la_n$ and $QAQ^{-1}\in \pU_n$, there
    is an invertible upper triangular matrix $R$ such that
    $RQAQ^{-1}R^{-1}$ is a diagonal matrix with diagonal entries
    $\la_1,\dots,\la_n$ in some order. Hence,
    $RQAQ^{-1}R^{-1}=P^{-1}AP$ for some $P\in S_n$.

    It follows that $S:=PRQ$ commutes with $A$ and so $S$ is a
    diagonal matrix. Now $RQBQ^{-1}R^{-1}\in \pU_n$ implies that
    $B\in S^{-1}P\pU_nP^{-1}S=P\pU_nP^{-1}$, i.e., $(A,B)\in
    \pF_{A,P}$. This concludes the proof of the first assertion.

    The second assertion follows from the assertion that, for each
    $P\in S_n,$ $P\pU_n P^{-1}, P\in S_n,$ contains the space of
    diagonal matrices.
\end{proof}

We show next that $\pW_n$ is the image of a smooth map defined on a
suitable vector bundle. The group $T_n=G_n\cap\pU_n$ acts on
$\pU_n\times\pU_n$ by simultaneous conjugation
$(t,x,y)\mapsto(txt^{-1},tyt^{-1}),$ where $t\in T_n$ and
$x,y\in\pU_n.$ There is also the right action of $T_n$ on $G_n$ by
right multiplication. By using these two actions one can construct a
vector bundle
\[ G_n\times_{T_n}(\pU_n\times\pU_n) \]
with base the homogeneous space $G_n/T_n$ and fibre
$\pU_n\times\pU_n.$ For more details about this construction we
refer the reader to \cite[p.46]{GB}.

The map (\ref{map}) induces a smooth map from the above vector
bundle to $M_n\times M_n.$ Since $\pW_n$ is the image of this
induced map, we have
\[ \dim\pW_n\leq\dim \left(G_n\times_{T_n}(\pU_n\times\pU_n)\right)=4n^2+2n(n+1)=2n(3n+1).\]
We shall see next that the equality sign holds here.

\begin{cor}\label{dimension of Wn}
    $\dim \pW_n = 2n(3n+1).$
\end{cor}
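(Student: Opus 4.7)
The plan is to match the upper bound $\dim \pW_n \leq 2n(3n+1)$, which has just been established via the surjective smooth map from $G_n\times_{T_n}(\pU_n\times\pU_n)$ onto $\pW_n$, with a corresponding lower bound $\dim \pW_n \geq 2n(3n+1)$. Everything needed for this is already assembled in Proposition \ref{nfactorial}; essentially, I just have to bookkeep dimensions along the first projection.

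First, I would restrict the projection $\pi_1\colon \pW_n\to M_n$ to the preimage $\pW_n^{gen}$ of the open dense semi-algebraic subset $M_n^{gen}\subseteq M_n$ consisting of matrices with $n$ distinct eigenvalues. Clearly $\dim M_n^{gen}=\dim M_n=4n^2$. By Proposition \ref{nfactorial}, for every $A\in M_n^{gen}$ the fibre $\pF_A=\pi_1^{-1}(A)$ is the union of $n!$ real vector spaces each of dimension $2n(n+1)$ (with common subspace of dimension $\geq 4n$), so each fibre has semi-algebraic dimension exactly $2n(n+1)$.

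Next I would invoke the standard semi-algebraic fact that if $f\colon X\to Y$ is a semi-algebraic surjection whose fibres all have a common dimension $d$, then $\dim X=\dim Y+d$ (see, e.g., Bochnak-Coste-Roy). Applied to $\pi_1|_{\pW_n^{gen}}$, this yields
\[ \dim \pW_n^{gen}=\dim M_n^{gen}+2n(n+1)=4n^2+2n(n+1)=2n(3n+1), \]
and since $\pW_n^{gen}\subseteq \pW_n$ we conclude $\dim \pW_n\geq 2n(3n+1)$, matching the upper bound.

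There is no real obstacle here: the only delicate point is the clean application of the fibre-dimension theorem, and an equally short alternative is available if one prefers to avoid citing it. Namely, one may count dimensions on the surjection $\Theta\colon G_n\times \pU_n\times\pU_n\to\pW_n$, $(g,A,B)\mapsto (gAg^{-1},gBg^{-1})$: for generic $(A,B)$ in the source (with $A$ having distinct eigenvalues), the fibre over $(gAg^{-1},gBg^{-1})$ is the $T_n$-orbit of $(g,A,B)$ together with finitely many Weyl translates, hence has dimension $\dim T_n=2n(n+1)$, giving $\dim\pW_n=4n^2+4n(n+1)-2n(n+1)=2n(3n+1)$ directly.
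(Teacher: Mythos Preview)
Your argument is correct and follows essentially the same route as the paper: the upper bound comes from the bundle $G_n\times_{T_n}(\pU_n\times\pU_n)$, and the lower bound from combining $\dim M_n=4n^2$ with the generic fibre dimension $2n(n+1)$ supplied by Proposition~\ref{nfactorial}. The paper states the lower-bound step more tersely, while you make the semi-algebraic fibre-dimension argument explicit and add an alternative count via $\Theta$; both are fine.
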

\begin{proof}
    Since the generic matrices form an open submanifold of
    $M_n$ and each generic fibre has dimension $2n(n+1)$, we
    conclude that
    \[ \dim \pW_n \geq 4n^2+2n(n+1) = 2n(3n+1). \]
    Hence, equality holds.
\end{proof}

We conclude that $\overline{\pW}_n$ has codimension $2n(n-1)$ in
$M_n\times M_n$. Consequently, $\pI_n'$ must have at last $2n(n-1)$
generators. In the next section we shall see that this bound is too
low when $n=2$.

\section{Matrix pairs in $M_2$ with a common eigenvector}

In this section we shall consider the special case $n=2$. The set
$\pW_2$ can be described also  as the set of all ordered pairs
$A,B\in M_2$ such that $A$ and $B$ share a common eigenvector. For
complex matrices, this special case has been fully resolved 
(see e.g. \cite{TL,RG,F}). Let us recall the result.

\begin{theorem} \label{Friedland}
For a pair of matrices $A,B\in M_2(\bC)$ the following are
equivalent:

(a) $A$ and $B$ are simultaneously triangularizable,

(b) $[A,B]^2= 0$,

(c) $\tr ([A,B]^2)= 0$,

(d) $\tr(A^2B^2-(AB)^2)=0,$

(e) $(2\tr (A^2)-(\tr (A))^2(2\tr(B^2)-(\tr(B))^2)= $ \hfil\break
\hbox{\qquad\qquad\qquad\qquad\qquad\qquad\qquad 
$(2\tr (AB)-\tr (A)\tr (B))^2$.}
\end{theorem}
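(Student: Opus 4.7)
The plan is to prove a cycle (a) $\Rightarrow$ (b) $\Rightarrow$ (c) $\Rightarrow$ (a), together with the two side equivalences (c) $\Leftrightarrow$ (d) and (c) $\Leftrightarrow$ (e); most of the work will be short computations, and only one step will carry real content. The crucial preliminary observation is that Cayley--Hamilton applied to the trace-zero matrix $[A,B]$ gives $[A,B]^2 = -\det([A,B])\,I$. Consequently the three conditions $[A,B]^2=0$, $\det[A,B]=0$, and $\tr([A,B]^2)=0$ all coincide (since $\tr([A,B]^2)=-2\det[A,B]$), which identifies (b) with (c) and isolates the substantive step as (b) $\Rightarrow$ (a).

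The trivial pieces (a) $\Rightarrow$ (b) $\Rightarrow$ (c) go as follows: after simultaneous triangularization, $[A,B]$ is strictly upper triangular, hence squares to zero in dimension $2$, and the trace of $0$ vanishes. For (c) $\Leftrightarrow$ (d), expanding $(AB-BA)^2$ and using cyclicity of $\tr$ gives $\tr([A,B]^2) = -2\tr(A^2B^2-(AB)^2)$. For (c) $\Leftrightarrow$ (e), first observe that $[A,B]$ and each of the three building blocks $2\tr(A^2)-(\tr A)^2$, $2\tr(B^2)-(\tr B)^2$, $2\tr(AB)-\tr A\,\tr B$ are invariant under the substitutions $A \mapsto A - \tfrac{1}{2}(\tr A)I$ and $B \mapsto B - \tfrac{1}{2}(\tr B)I$. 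One may therefore assume $A$ and $B$ are traceless, and then a direct expansion in the three entries of each matrix shows that the difference of the two sides of (e) is a positive scalar multiple of $\det[A,B]$, which by Cayley--Hamilton is proportional to $\tr([A,B]^2)$.

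The main obstacle is (b) $\Rightarrow$ (a): from $[A,B]^2=0$ one must produce a common eigenvector of $A$ and $B$, which in dimension $2$ is enough for simultaneous triangularization. I would argue by cases on the structure of $A$. If $A=\lambda I$, any eigenvector of $B$ serves. If $A$ has two distinct eigenvalues $\lambda_1\neq\lambda_2$ with eigenvectors $v_1,v_2$, writing $Bv_1=\alpha v_1+\beta v_2$ and $Bv_2=\gamma v_1+\delta v_2$ and computing gives $[A,B]^2 v_1=-\beta\gamma(\lambda_1-\lambda_2)^2 v_1$, forcing $\beta\gamma=0$ and so one of $v_1,v_2$ is a $B$-eigenvector. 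If $A$ has a single eigenvalue $\lambda$ but is non-scalar, then $(A-\lambda I)^2=0$ and $\ker(A-\lambda I)$ coincides with the image of $A-\lambda I$, spanned by some $v$. Hence $[A,B]v=(A-\lambda I)Bv$ lies in this one-dimensional span, so $[A,B]v=cv$ for some scalar $c$; since $[A,B]$ is nilpotent, $c=0$. Thus $ABv=\lambda Bv$, so $Bv \in \ker(A-\lambda I)$ is a scalar multiple of $v$, and $v$ is a common eigenvector, completing the cycle.
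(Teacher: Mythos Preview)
The paper does not supply its own proof of this theorem; it is quoted there as a known result with references to Laffey, Guralnick, and Friedland. So there is nothing to compare your argument against.

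That said, your proof is correct and self-contained. The Cayley--Hamilton observation that $[A,B]^2=-\det([A,B])\,I$ (since $\tr[A,B]=0$) cleanly identifies (b) with (c). Your trace manipulation for (c) $\Leftrightarrow$ (d) is right: expanding gives $\tr([A,B]^2)=-2\,\tr(A^2B^2-(AB)^2)$. For (c) $\Leftrightarrow$ (e), the traceless reduction is valid, and the residual identity one has to check is that for traceless $2\times 2$ matrices $\tr(A^2)\tr(B^2)-(\tr AB)^2=\det[A,B]$, which a short coordinate computation confirms. Finally, your case analysis for (b) $\Rightarrow$ (a) is complete: the two-eigenvalue case yields $\beta\gamma=0$ as you write, and in the non-scalar Jordan-block case the key point that $\operatorname{im}(A-\lambda I)=\ker(A-\lambda I)$ forces $[A,B]v\in\mathbb{C}v$, whence nilpotency of $[A,B]$ gives $[A,B]v=0$ and $v$ is a common eigenvector. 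Nothing to correct.
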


Clearly, this result is much stronger than what Proposition
\ref{permutable trace} gives in this case.

We continue with an easy lemma of independent interest.

\begin{lemma} \label{pure}
Both eigenvalues of the matrix $A\in M_2$ are purely imaginary if
and only if
    \begin{enumerate}
    \item $\Tr (A)=\Tr (A^3)=0$,
    \item $\Tr (A^2)\leq0$ and
    \item $2\Tr (A^4)\leq (\Tr (A^2))^2 \leq 4\Tr (A^4)$.
    \end{enumerate}
\end{lemma}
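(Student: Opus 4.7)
The plan is to reduce the problem to a condition on the characteristic polynomial of $\chi_2(A)\in M_4(\bC)$ and then translate via Newton's identities. Recall from the preliminaries that the eigenvalues of $\chi_2(A)$ are $\lambda_1,\bar\lambda_1,\lambda_2,\bar\lambda_2$, where $\lambda_1,\lambda_2$ are the eigenvalues of $A$ in the sense of Section 2, and that $\Tr(A^k)=\tr\chi_2(A)^k$ equals the $k$-th power sum $p_k$ of these four complex numbers.

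The first step is to observe that $\lambda_1,\lambda_2$ are both purely imaginary if and only if every root of the real quartic $p(t)=\prod_{j=1}^2(t-\lambda_j)(t-\bar\lambda_j)$ is purely imaginary. I would then prove the elementary fact that such a real quartic has only purely imaginary roots iff it has the form $p(t)=t^4+s_1 t^2+s_2$ with $s_1,s_2\ge 0$ and $s_1^2\ge 4s_2$. The forward direction writes the roots as $\pm\mathsf{i}b_1,\pm\mathsf{i}b_2$, giving $p(t)=(t^2+b_1^2)(t^2+b_2^2)$; the converse amounts to checking that the two values $u=(-s_1\pm\sqrt{s_1^2-4s_2})/2$ playing the role of $t^2$ are real and nonpositive, which is exactly the trio of inequalities.

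Writing $p(t)=t^4-e_1 t^3+e_2 t^2-e_3 t+e_4$, the condition above reads $e_1=e_3=0$, $e_2\ge 0$, $e_4\ge 0$ and $e_2^2\ge 4e_4$. I will then invoke Newton's identities, which in the specialization $p_1=e_1=0$ yield
\[ e_2=-\tfrac{p_2}{2},\quad e_3=\tfrac{p_3}{3},\quad e_4=\tfrac{p_2^2-2p_4}{8},\quad e_2^2-4e_4=\tfrac{4p_4-p_2^2}{4}. \]
Substituting $p_k=\Tr(A^k)$, the condition $e_1=e_3=0$ becomes (1); the condition $e_2\ge 0$ becomes (2); the condition $e_4\ge 0$ becomes the left inequality of (3); and $e_2^2-4e_4\ge 0$ becomes the right inequality of (3).

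The argument is entirely mechanical once the characterization of real quartics with purely imaginary roots is in hand; the only thing requiring any care is packaging the reality and nonpositivity of $t^2$ into the three stated inequalities on $(s_1,s_2)$, but this is an elementary quadratic discussion, and the Newton-identity computation then makes the translation to (1)--(3) transparent.
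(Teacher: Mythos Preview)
Your proposal is correct and follows essentially the same route as the paper: both arguments pass to the characteristic polynomial of $\chi_2(A)$, use Newton's identities to rewrite the $e_i$ in terms of the $\Tr(A^k)$, and then observe that the biquadratic $z^4+e_2 z^2+e_4$ has only purely imaginary roots precisely when the two values of $z^2$ are real and nonpositive. The only cosmetic difference is that the paper treats necessity by a direct computation on a Schur form (and an appeal to Lemma~\ref{imaginary}) rather than by running the Newton-identity translation in reverse, whereas you handle both directions uniformly through the characterization of real quartics with purely imaginary roots.
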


\begin{proof}
    Necessity of (1) and (2) follows directly from Lemma
    \ref{imaginary}. For (3), we may assume $A=\left[\begin{smallmatrix}
    \mathsf{i}\al & *\\ 0 & \mathsf{i}\be\end{smallmatrix}\right]$
    with $\al,\be\geq 0.$ Then $\Tr A^2=-2(\al^2+\be^2)$ and
    $\Tr A^4=2(\al^4+\be^4)$. It is clear from this that (3)
    is satisfied.

    Suppose now that the conditions (1-3) hold and let $\la_1,\la_2$
    be the eigenvalues of $A$. If
    \[ f(z)=z^4-e_1z^3+e_2z^2-e_3z+e_4, \]
    is the characteristic polynomial for $\chi_2(A)$ then
    $e_1,e_2,e_3,e_4$ are elementary symmetric functions of the
    eigenvalues $\la_1,\la_2,\bar{\la}_1,\bar{\la}_2$ of
    $\chi_2(A).$ By (1) and Newton's identities, we have
    \[ e_1=e_3=0,\quad e_2=-\frac{1}{2}\Tr A^2, \quad
        e_4=\frac{1}{8}\left((\Tr A^2)^2 - 2\Tr A^4\right). \]
    So we have that
    \[ f(z)=z^4-\frac{1}{2}(\Tr A^2)z^2+\frac{1}{8}\left((\Tr A^2)^2 - 2\Tr A^4\right). \]
    The inequalities of the lemma show that this quadratic polynomial in
    $z^2$ has two real roots, both $\leq0$. Hence the eigenvalues are
    indeed purely imaginary.
\end{proof}

As in the previous section, we obtain the following corollary.

\begin{cor}
    Let $(A,B)\in\pW_2$ and let $\pA\subseteq M_2$ be the unital
    subalgebra generated by $A$ and $B$. Then for all $X,Y\in \pA$
    we have
    \begin{enumerate}
        \item $\Tr([X,Y]^3)=0$,
        \item $\Tr([X,Y]^2)\leq0$, and
        \item $2\Tr([X,Y]^4) \leq (\Tr([X,Y]^2))^2\leq 4\Tr([X,Y]^4)$.
    \end{enumerate}
\end{cor}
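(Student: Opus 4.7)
The plan is to reduce everything to Lemma \ref{pure} by showing that the commutator $[X,Y]$ has purely imaginary eigenvalues whenever $X,Y$ lie in a triangularizable subalgebra of $M_2$.

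First I would observe that since $(A,B)\in\pW_2$ there exists $P\in\GL_2(\bH)$ with $PAP^{-1},PBP^{-1}\in\pU_2$. Because $\pU_2$ is a unital subalgebra of $M_2$ (closed under sums and products, containing $I$), the unital subalgebra $\pA$ generated by $A$ and $B$ satisfies $P\pA P^{-1}\subseteq\pU_2$. Consequently, for every $X,Y\in\pA$ the matrices $PXP^{-1}$ and $PYP^{-1}$ are upper triangular, say with diagonal entries $x_1,x_2$ and $y_1,y_2$ respectively, so that $P[X,Y]P^{-1}$ is upper triangular with diagonal entries $x_iy_i-y_ix_i$ for $i=1,2$.

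Next I would note that any quaternionic commutator $xy-yx$ is a \emph{pure} quaternion, since $\Re(xy)=\Re(yx)$. Hence the two diagonal entries of $P[X,Y]P^{-1}$ are pure quaternions $p_1,p_2$. A pure quaternion $p$ is $\Sp(1)$-conjugate to $\mathsf{i}|p|$, so after a further $\Sp(2)$-conjugation (as in Lemma \ref{uptri}) the upper triangular form of $[X,Y]$ has diagonal $\mathsf{i}|p_1|,\mathsf{i}|p_2|$. Thus both eigenvalues of $[X,Y]$, in the sense of the eigenvalues defined after $\chi_n$ in section 2, are purely imaginary.

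With this in hand, the corollary is immediate by applying Lemma \ref{pure} to the matrix $[X,Y]\in M_2$: its conditions (1), (2), (3) translate directly into the three assertions of the corollary. The only technical point to verify is the correspondence between the diagonal entries of the simultaneously triangularized form and the paper's notion of eigenvalues; this is a minor bookkeeping issue, not a real obstacle, since traces are invariant under all conjugations and the diagonal entries of $(P[X,Y]P^{-1})^k$ are simply $p_1^k,p_2^k$. So the entire proof is a one-line reduction to Lemma \ref{pure} preceded by the observation that triangularizability of $\pA$ forces the diagonal entries of $[X,Y]$'s triangular form to be pure quaternions.
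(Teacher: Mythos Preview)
Your argument is correct and is essentially the paper's proof unpacked: the paper's one-line proof ``It suffices to observe that $\pA\times\pA\subseteq\pW_2$'' is precisely your observation that $P\pA P^{-1}\subseteq\pU_2$, after which the purely imaginary eigenvalues of $[X,Y]$ (which you spell out and the paper treats as clear from the remark preceding Corollary~\ref{imaginary2}) feed directly into Lemma~\ref{pure}.
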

\begin{proof}
    It suffices to observe that $\pA\times\pA\subseteq\pW_2$.
\end{proof}

It is known that the algebra $\pP_2'$ (see the previous section for
the definition) has a minimal set of bihomogeneous generators (MSG)
of cardinality 32 (see \cite{DS,DD}). In the remainder of this section
we shall summarize the results that we obtained while trying to
construct an MSG of the ideal $\pI_2'\subseteq\pP_2'$. In our
computations we used the generators constructed in \cite{DD}.

Let $\pI_2'(k,l)$ denote the subspace of $\pI_2'$ consisting of
homogeneous functions of bidegree $(k,l)$ and let $d_{k,l}$ be its
dimension. Let $\pI_2'(s)$ be the sum of the $\pI_2'(k,s-k)$ for
$k=0,1,\dots,s$ and set $d_s=\dim\pI_2'(s).$ Since $\pW_2$ is
invariant under the switching map $(x,y)\mapsto(y,x),$ we have
$d_{l,k}=d_{k,l}$ for all $k$ and $l$. We have computed the
dimensions $d_{k,l}$ for $k+l\leq 15,$ as seen in Figure 1.

\newpage
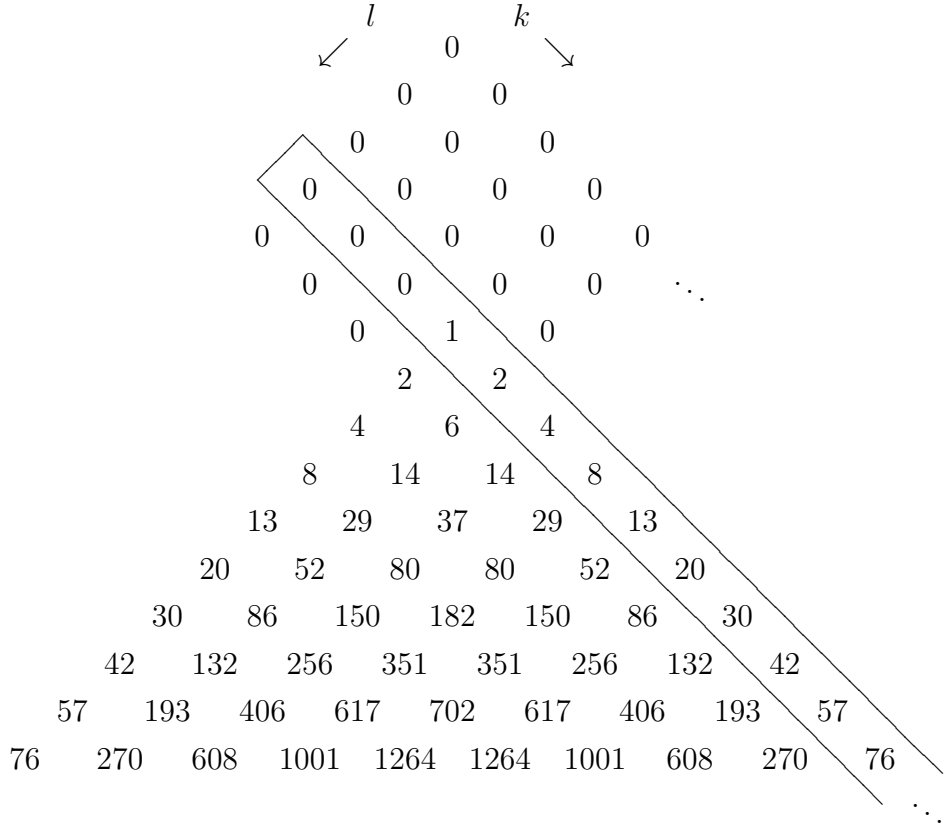
\begin{figure}
\begin{xy}
(111,-21)*{} ; (26,64)**\dir{-} ; (20,58)**\dir{-} ;
(103,-25)**\dir{-} ;(55,80)*{k}; (35,80)*{l}; (60,75)*{\searrow};
(30,75)*{\swarrow}; (-15,0)*{\xymatrix @ur @!=1pt {
           &    &     &     &     &     &    &       & 0  & 0 & 0 & 0 & 0     \\
           &    &     &     &     &     &    &       & 0  & 0 & 0 & 0 & 0     \\
           &    &     &     &     &     &    &       & 0  & 0 & 0 & 0 & 0     \\
        76 & 57 & 42  & 30  &  20 & 13  & 8  &   4   & 2  & 1 & 0 & 0 & 0     \\
           &270 & 193 & 132 &  86 & 52  &29  &  14   & 6  & 2 & 0 & 0 & 0     \\
           &    & 608 & 406 & 256 & 150 &80  &  37   & 14 & 4 &   &   &\ddots \\
           &    &     & 1001& 617 & 351 &182 &  80   & 29 & 8 &   &   &       \\
           &    &     &     & 1264& 702 &351 & 150   & 52 &13 &   &   &       \\
           &    &     &     &     &1264 &617 & 256   & 86 &20 &   &   &       \\
           &    &     &     &     &     &1001& 406   &132 &30 &   &   &       \\
           &    &     &     &     &     &    & 608   &193 &42 &   &   &       \\
           &    &     &     &     &     &    &       &270 &57 &   &   &       \\
           &    &     &     &     &     &    &       &    &76 &   &   &       \\
           &    &     &     &     &     &    &       &    &\ddots &&   &       \\}}
\end{xy}
\caption{
    The dimensions
    $d_{k,l}$; $k,l\geq 0;\,k+l\leq 15.$}
\end{figure}

The sequence, seen isolated in Figure 1,
\[(d_{k,3})_{k\geq0}=0,0,0,1,2,48,13,20,30,42,57,76,\dots\] is
apparently the same as the sequence A061866 in the On-Line
Encyclopedia of Integer Sequences \cite{NS}. The latter sequence
$(a_k)_{k\geq 0}$ has the following definition: The integer $a_k$ is
the number of integer triples $(x,y,z)$ such that $1\leq x<y<z\leq
k$ and $x+y+z\equiv 0\,\pmod{3}.$ The middle ``vertical" sequence
\[ (d_{k,k})_{k\geq0}=0,0,0,1,6,37,180,698,\dots\]
is not recorded in this encyclopedia.

In principle one can construct an MSG of $\pI_2'$ by the following
routine procedure. Denote by $\pJ'_m$ the ideal of $\pP_2'$
generated by the subspaces $\pI_2'(k)$ for $k\leq m.$ Define the
subspaces $\pJ'_m(k,l)$ and $\pJ'_m(s)$ similarly to $\pI_2'(k,l)$
and $\pI_2'(s).$ Clearly we have that
$0=\pJ'_0\subseteq\pJ'_1\subseteq\cdots$ and $\pJ'_m\subseteq\pI_2'$
for all $m$. Since $d_m=0$ for $m<6$, we also have $\pJ'_m=0$ for
$m<6.$ Since $d_6=d_{3,3}=1,\,\pJ'_6$ is generated by a single
polynomial $f_1\in\pJ'_2(3,3),$ see Table 2 and formula (\ref{jed}).
Since $\dim\pJ'_6(3,4)=\dim\pJ'_6(4,3)=1,$ while
$d_{3,4}=d_{4,3}=2,$ the ideal $\pJ'_7$ is generated by $f_1$ and
two new generators: $f_2\in\pI_2'(3,4)$ and $f_3\in\pI_2'(4,3).$
Similarly, $\pJ'_8$ is generated by $f_1,f_2,f_3$ and four new
generators:
\[ f_4\in\pI_2'(3,5);\quad f_5,f_6\in\pI_2'(4,4);\quad f_7\in\pI_2'(5,3).\]

An MSG for $\pJ'_9$ consists of $f_1,\dots, f_7,$ and ten new
generators:
\[ f_8,f_9\in\pI_2'(3,6);\quad f_{10},f_{11},f_{12}\in\pI_2'(4,5);\]
\[f_{13},f_{14},f_{15}\in\pI_2'(5,4);\quad f_{16},f_{17}\in\pI_2'(6,3).\]

To obtain an MSG for $\pJ'_{10},$ one has to add to this MSG of
$\pJ'_9$ additional 19 generators $f_{18},\dots,f_{36}.$ For
$\pJ'_{11}$ we need additional 22 generators.

By Hilbert's Basis Theorem we know that this procedure must
terminate and so $\pJ'_m=\pI_2'$ holds for sufficiently large $m$.
However we do not know the value of $m.$ Our computations suggest
that $\pJ'_{13}=\pI'_2.$

We were able to find the first 92 generators using this procedure
and hence, compute the ideal $\pJ'_m$ for $m\leq 14$. In Table 2, we
give our minimal set of generators for $\pJ'_9.$

\begin{center}
    \textbf{Table 2:} An MSG of the ideal $\pJ'_9$
    $$
    \begin{array}{l c l}
        f_1=\Tr (xy^2x[x,y])              &\quad\quad& f_2=\Tr (xy^3x[x,y])\\
        f_3=\Tr (yx^3y[x,y])              &          & f_4=\Tr (y^2x^2y^2[x,y])\\
        f_5=\Tr (xy^3x[x^2,y])            &          & 
							f_6=\Tr ([x,y]\left[[x^2,y],[x,y^2]\right])\\
        f_7=\Tr (x^2y^2x^2[x,y])          &          & f_8=\Tr (yxy^3xy[x,y]) \\
        f_9=\Tr (\left[[x,y],y\right]^3)  &          & f_{10}=\Tr (y^2x^3y^2[x,y])\\
        f_{11}=\Tr([x,y][x,y^2][x^2,y^2]) &          &
				f_{12}=\Tr (\left[[x,y],x\right]\left[[x,y],y\right]^2)\\
        f_{13}=\Tr (x^2y^3x^2[x,y])       &          & 
								f_{14}=\Tr ([x,y][x^2,y][x^2,y^2])\\
        f_{15}=\Tr (\left[[x,y],y\right]\left[[x,y],x\right]^2) & & 
								f_{16}=\Tr (xyx^3yx[x,y]) \\
        f_{17}=\Tr (\left[[x,y],x\right]^3)
    \end{array}
    $$
\end{center}

By using our MSG for $\pJ'_{11}$, we can show that an MSG for
$\pJ'_{12}$ requires additional 28 generators and $\pJ'_{13}$
requires 6. We find it surprising that an MSG of $\pI_2'$ is so
large (it has at least 92 generators). The number of generators of
the given bidegree (bidegree multiplicity) contained in an MSG of
$\pI'_{2}$ is shown in Figure 2 for all bidegrees $(k,l)$ with
$k+l\leq 14$. The top entry corresponds to the generator $f_1$ of
bidegree (3,3)
\newpage
\begin{figure}\label{fig2}
\begin{xy}
(80,55)*{l}; (94,55)*{k}; (99,50)*{\searrow};
(75,50)*{\swarrow};(143,50)*{6};(143,44)*{7};(143,38)*{8};
(143,32)*{9};(143,26)*{10};(143,19)*{11};(143,13)*{12};
(143,7)*{13};(143,0)*{14};
 (18,0)*{\xymatrix @ur
@!=1pt {
       0 & 0& 1 & 1 & 1 & 2 & 1 & 1 & 1 \\
         & 0& 1 & 4 & 4 & 5 & 3 & 2 & 1 \\
         &  & 0 & 1 & 5 & 6 & 7 & 3 & 1 \\
         &  &   & 0 & 1 & 8 & 6 & 5 & 2 \\
         &  &   &   & 0 & 1 & 5 & 4 & 1 \\
         &  &   &   &   & 0 & 1 & 4 & 1 \\
         &  &   &   &   &   & 0 & 1 & 1 \\
         &  &   &   &   &   &   & 0 & 0 \\
         &  &   &   &   &   &   &   & 0 \\}}
\end{xy}
\caption{
    Bidegree multiplicities of an MSG of $\pI'_{2}.$}
\end{figure}

Let us now describe one of the methods that we used to compute these
generators, along with an example. We begin with the trace functions
that we know vanish on $\pW_2$ (see Corollaries \ref{imaginary2},
and \ref{eq3} for available families). Notice that all these traces
have bidegree of the form $(k,k)$ and thus do not provide us with
all the generators. We make the simple observation that $\pW_2$ is
invariant under the substitution $(x,y)\mapsto(x+\al,y+\be)$ where
$\al,\be\in\bR$.

Consider the partial derivation operators $\Dx$ and $\Dy$ on the
polynomial algebra in two noncommuting indeterminates $x$ and $y$.
For instance
\[ \Dx (xyxy^2) = yxy^2 + xy^3.\]
For a given noncommutative polynomial $p(x,y)$, with $\Tr (p(x,y))$ in
$\pI_2'(k,l)$, we obtain that
\[ 0=\Tr (p(x+\al,y+\be))= \Tr
    \left(\sum_{i,j=0}^{k,l}p_{i,j}(x,y)\al^{k-i}\be^{l-j}\right)\]
which implies that $\Tr (p_{i,j}(x,y))\in\pI_2'(i,j)$ for all $i,j$.

We claim that if $\Tr (p(x,y))\in \pI_2'$ then also
\[ \Tr \left(\Dx p(x,y)\right),\, \Tr \left(\Dy p(x,y)\right)\in \pI_2'.\]
This follows from our observation above, along with the fact that
$\Dx p(x,y)$ is equal to the coefficient of $\al$ in the expansion
of $p(x+\al,y),$ and similarily for the other derivitave.

With this, we give explicit computation of $f_{13}.$
\begin{example}
Consider $p(x,y)=x^3y^3x^2y^2-y^3x^3y^2x^2$. By Corollary
\ref{qident} we have that $\Tr (p(x,y)) \in\pI_2'(5,5).$ Then, we can
find an element from $\pI_2'(5,4)$ by computing
\[\Dy p(x,y)=3x^3y^2x^2y^2+2x^3y^3x^2y-3y^2x^3y^2x^2-2y^3x^3yx^2,\]
and observing that the trace of this element is equal to
\[ -2\Tr \left(x^2y^3x^2[x,y]\right). \]
Thus, we obtain the generator $f_{13}\in\pI_2'(5,4)$.
\end{example}

We have verified using Maple that the Jacobian matrix of the
generators $f_1,f_2,f_3,f_6$ generically has rank 4. This shows that
these generators are algebraically independent and agrees with the
fact that $\pW_2$ has codimension 4.

\section{Some open problems}

We conclude with the list of four open problems related to the
topics discussed in this paper. The first problem, suggested by
Lemma \ref{imaginary}, is about complex numbers.

\begin{problem}
    Let $\la_1,\dots,\la_n\in\bC$ and set
    $\tau_k:=\Re(\la_1^k+\cdots+\la_n^k)$ for $k=1,2,3,\dots$.
    Characterize the sequences $(\la_1,\dots,\la_n)$ for
    which $\tau_{2k-1}=0,\,\tau_{4k-2}\leq 0$ and $\tau_{4k}\geq0$ for
    all integers $k\geq1.$
\end{problem}

We warn the reader that the conditions imposed on the $\tau_k$'s do
not imply that all $\la_i$'s are purely imaginary. Replacing some of
the numbers $\la_i$ with $\bar{\la}_i$ does not affect the
conditions of the problem. Hence, without any loss of generality one
may assume that all $\Im(\la_i)\geq0$.

The problem we discussed in sections 6 and 7 remains open.

\begin{problem}
    Find a finite set of polynomial equations and inequalities
    that define $\pW_2$ as a semi-algebraic set.
\end{problem}

\begin{problem}
    Describe the Zariski closure $\overline{\pW}_2$ and compute an
    MSG for the ideals $\pI_2$, $\pI_2'$ and $\pI_2''$.
    In particular, is it true that the pair
    $\left[\begin{smallmatrix}1&0\\0&0\end{smallmatrix}\right],
    \left[\begin{smallmatrix}0&1\\1&0\end{smallmatrix}\right]$
    belongs to $\overline{\pW}_2?$
\end{problem}

Note that this pair does not belong to $\pW_2.$ We have verified
that all 92 generators of $\pJ'_{14}$ vanish on it.

Finally, Figure 1 suggests the following problem.

\begin{problem}

(a) Prove that the sequence $(d_{k,3})_{k\geq0}$ is identical to the
    sequence A061866. Also construct a bijection from the set of integer triples
    $(x,y,z),$ used in the definition of A061866, to a suitable basis
    of $\pI_2'(k,3).$

(b) Identify the sequence $(d_{k,k})_{k\geq0}.$ For
        instance, find the generating function or an explicit formula for
        the $d_{k,k}$.
\end{problem}

\end{document}